\definecolor{darkblue}{rgb}{0,0,0.4} 
\newtheorem{theorem}{Theorem}[section]
\newtheorem{proposition}[theorem]{Proposition}
\newtheorem{question}[theorem]{Question}
\newtheoremstyle{defn}{12pt}{12pt}{}{}{\bfseries}{.}{ }{}
\theoremstyle{defn}
\newtheorem{definition}[theorem]{Definition}
\theoremstyle{remark}
\newtheorem{remark}[theorem]{Remark}
\def\Z{\mathbb{Z}}
\def\ZZ{\mathbb{Z}}
\def\R{\mathbb{R}}
\def\A{\mathcal{A}}
\def\k{\mathbbm{k}}
\def\d{\partial}
\def\e{\epsilon}
\def\tb{\operatorname{tb}}
\def\LCH{\operatorname{LCH}}
\def\Aug{\operatorname{Aug}}
\def\Hom{\operatorname{Hom}}
\newcommand{\co}{\nobreak\mskip2mu\mathpunct{}\nonscript
  \mkern-\thinmuskip{:}\penalty300\mskip6muplus1mu\relax}
\begin{document}

\title{Torsion in linearized contact homology for Legendrian knots}
\author{Robert Lipshitz}
\thanks{\texttt{RL was supported by NSF Grant DMS-2204214 and a Simons Fellowship}}
\email{\href{mailto:lipshitz@uoregon.edu}{lipshitz@uoregon.edu}}
\address{Department of Mathematics, University of Oregon, Eugene, OR 97403}

\author{Lenhard Ng}
\thanks{\texttt{LN was supported by NSF Grant DMS-2003404.}}
\email{\href{mailto:ng@math.duke.edu}{ng@math.duke.edu}}
\address{Department of Mathematics, Duke University, Durham, NC 27708}

\begin{abstract}
  We present examples of Legendrian knots in
  $\mathbb{R}^3$ that have linearized Legendrian contact homology over
  $\Z$ containing torsion. As a consequence, we show that there
  exist augmentations of Legendrian knots over $\mathbb{Z}$ that
  are not induced by exact Lagrangian fillings, even though their
  mod $2$ reductions are.
\end{abstract}

\maketitle

\section{Introduction}
Holomorphic-curve invariants are powerful tools for studying Legendrian submanifolds of contact manifolds. This paper concerns the basic setting of Legendrian knots in $\R^3$, with respect to the standard contact structure $\ker (dz-y\,dx)$, and the holomorphic-curve invariant known as Legendrian contact homology.

In \cite{Che}, Chekanov, inspired by ideas of Eliashberg, associated a differential graded algebra over $\Z/2$ to a Legendrian knot $\Lambda\subset \R^3$; this DGA is called the \emph{Chekanov--Eliashberg DGA} and its homology is \emph{Legendrian contact homology}. (As is fairly common, we will use these terms largely interchangeably; the homology itself plays no role in this paper.)
Up to the appropriate notion of equivalence---stable tame isomorphism---the Chekanov--Eliashberg DGA is invariant under Legendrian isotopy, but it is difficult to tell if two DGAs are stable tame isomorphic. To extract more tractable invariants, Chekanov used augmentations (homomorphisms to $\Z/2$) to construct finite-dimensional chain complexes from the DGA. The homologies of these complexes are called \emph{linearized (Legendrian) contact homology} and denoted $\LCH_*^\e(\Lambda;\Z/2)$; Chekanov showed that the collection of linearized contact homologies over all augmentations $\e$ is a Legendrian-isotopy invariant. Using this, he gave the first example of a pair of Legendrian knots, of topological type $m(5_2)$, which have the same classical invariants (knot type, Thurston--Bennequin number, and rotation number) but are not Legendrian isotopic.

Since Chekanov's work, the study of Legendrian contact homology has developed into a rich subject with connections to areas including microlocal sheaf theory, cluster algebras, homological mirror symmetry, and topological string theory. We refer the reader to the survey \cite{ENsurvey} for a discussion of Legendrian contact homology in $\R^3$, but we briefly mention a couple of developments that are relevant here. 

First, although Chekanov introduced augmentations and linearized contact homology as purely algebraic objects associated to the Chekanov--Eliashberg DGA, we now understand certain augmentations as having geometric origins. Specifically, in~\cite{EHK}, Ekholm--Honda--K\'alm\'an showed that any exact Lagrangian filling $L$ of a Legendrian knot $\Lambda$ induces an augmentation of the DGA of $\Lambda$. Furthermore, the linearized contact homology associated to this augmentation is precisely the usual homology of the filling $L$; this result is colloquially called the Seidel isomorphism.

Second, the coefficient ring of the Chekanov--Eliashberg DGA can be lifted from $\Z/2$ to $\Z$ by assigning coherent orientations to the Floer-type moduli spaces underlying Legendrian contact homology. This was done first for Legendrian knots in $\R^3$ in \cite{ENS} and then for more general Legendrian submanifolds in arbitrary dimension in \cite{EES-ori}. One can subsequently tensor with an arbitrary field $\k$ and study Legendrian contact homology over $\k$, as a number of papers have done, or study Legendrian contact homology over $\Z$ directly.

With $\Z$ coefficients, linearized contact homology is lifted from a graded $\Z/2$-vector space to a graded $\Z$-module. This raises the possibility that it might contain torsion. For Legendrians in high dimension (where the contact manifold has dimension $\geq 5$), it is well-known that linearized contact homology can indeed contain torsion. The earliest examples of torsion in high dimension were provided in \cite{EES-ori}, where torsion is used to distinguish between Legendrian submanifolds that share the same classical invariants and Legendrian contact homology over $\Z/2$. As another example, torsion for knot conormal tori has been shown to encode the determinant of the underlying smooth knot (see~\cite{NgKCH2}). More recently, in \cite{Golovko} Golovko showed that any finitely generated abelian group can appear as the linearized contact homology of some high-dimensional Legendrian.

In this paper, we show that torsion in linearized contact homology also appears for Legendrian knots in $\R^3$. To our knowledge, it was previously an open question whether such torsion could exist. Throughout this paper, we use $\LCH_*^\e(\Lambda)$ to denote the linearized contact homology of $\Lambda$ over $\Z$ associated to a $\Z$-valued augmentation $\e$. 

\begin{proposition}
  There are Legendrian knots $\Lambda$ in $\R^3$ such that for any $n\geq 2$, there is a $\ZZ$-valued augmentation $\e_n \co \A_\Lambda \to \Z$
  of the Chekanov--Eliashberg DGA of $\Lambda$ for which the linearized contact homology $\LCH_*^{\e_n}(\Lambda)$ contains a $\Z/n$ summand.
\label{prop:main}
\end{proposition}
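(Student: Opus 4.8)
The plan is to exhibit an explicit Legendrian knot together with an explicit one‑parameter family of $\Z$‑valued augmentations, and to read off the torsion directly from the linearized differential. The conceptual point is that, in contrast with the $\Z/2$ theory, the augmentation values $\e(a)\in\Z$ are integers constrained only by the augmentation equations $\e(\partial c)=0$ (one for each degree‑$1$ generator $c$), and these values enter the linearized differential \emph{polynomially}: writing $\partial c=\sum_w \lambda_w\,w$ as a $\Z$‑linear combination of words $w=x_1\cdots x_m$ in the generators, one has
\[
\partial_1^{\e}(c)=\sum_{b}\Bigl(\sum_{w}\lambda_w\sum_{i\colon x_i=b}\ \prod_{j\neq i}\e(x_j)\Bigr)b .
\]
Since the torsion in $\LCH_*^{\e}(\Lambda)$ is governed by the Smith normal forms of the integer matrices $\partial_1^{\e}$ in each degree, it suffices to arrange, in two consecutive degrees, that $\partial_1^{\e_n}$ has $n$ as an elementary divisor, with the neighbouring differentials positioned so that this divisor survives to homology as a $\Z/n$ summand.

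The algebraic target is therefore small. I would look for a knot whose DGA contains two degree‑$0$ generators $a,b$ and a degree‑$1$ generator $c$ with $\partial c=ab+(\text{terms involving no factor of }b)$, where $a$ may be augmented to any integer while $b$ is augmented to $0$. The augmentation equation $\e(\partial c)=\e(a)\e(b)=0$ then holds for every value $\e(a)$, and the formula above gives $\partial_1^{\e}(c)=\e(a)\,b$. Setting $\e_n(a)=n$ yields $\partial_1^{\e_n}(c)=n\,b$, so the summand $\Z\langle b\rangle$ of the degree‑$0$ chain group is killed to exactly $\Z/n$ in homology, provided $b$ is a cycle and no other generator maps onto $b$ with a unit or smaller coefficient. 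Note that only chords in degrees $0$ and $1$ are needed, which makes the pattern much easier to realize than one might first fear.

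The heart of the argument, and the step I expect to be hardest, is the geometric realization: producing a genuine Legendrian knot in $\R^3$ whose Chekanov--Eliashberg DGA has this shape. Here I would draw an explicit front (equivalently Lagrangian) projection, enumerate its Reeb chords together with their $\Z$‑gradings (fixed by the rotation numbers of capping paths), and compute $\partial$ as the signed count of holomorphic disks, checking $\partial^2=0$ and the grading placement of $a$, $b$, $c$. The difficulty is that $\partial$ must arise from an honest disk count rather than being written down by fiat, so the design of the diagram---ensuring a degree‑$1$ chord whose disk count produces exactly the monomial $ab$ together with a degree‑$0$ generator whose augmentation value is unconstrained---is delicate. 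A practical alternative is to scan the existing tables of Legendrian‑knot DGAs for a knot whose $\Z$‑augmentation variety already contains such a free integer parameter; in either case the final homology computation is then routine linear algebra over $\Z$.

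Finally, for the stated consequence about fillings I would reduce the $\e_n$ modulo $2$. By arranging the torsion‑producing coefficient to be a \emph{difference} of augmentation values with fixed parities---so that the reduction $\bar\e_n$ is a single augmentation independent of $n$, induced by an exact Lagrangian filling $L$---the Seidel isomorphism of Ekholm--Honda--K\'alm\'an forces any filling‑induced $\Z$‑augmentation to have linearized homology isomorphic to the torsion‑free group $H_*(L)$. Since $\LCH_*^{\e_n}(\Lambda)$ contains torsion, no $\e_n$ can be induced by a filling, even though its mod‑$2$ reduction $\bar\e_n$ is.
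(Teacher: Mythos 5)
Your algebraic mechanism is exactly the one the paper exploits: in the paper's knot $\Lambda_0$ of type $m(8_{21})$ there is a degree-$1$ chord $a_7$ with $\d a_7=-a_1a_4$, the augmentation equations (coming from the other degree-$1$ chords) force $\e(a_4)=0$ on one branch of the augmentation variety while leaving $\e(a_1)$ completely free, and setting $\e_n(a_1)=n$ gives $\d^{\e_n}a_7=-na_4$, producing the $\Z/n$ summand---precisely your pattern $\d c=ab+\cdots$ with $c=a_7$, $a=a_1$, $b=a_4$. The genuine gap is that you never produce the knot. The proposition is an existence statement, and you correctly identify the geometric realization as the hardest step, but then leave it at ``draw a front or scan the tables.'' Nothing in your argument certifies that the required DGA shape---a monomial $ab$ in some $\d c$, together with augmentation equations leaving $\e(a)$ unconstrained over $\Z$---is actually realizable by holomorphic disk counts, and until a specific front is exhibited, its gradings and differential computed, and the augmentation equations solved, the proof is incomplete. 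The paper closes this gap by an explicit computation for $m(8_{21})$ (and later for an infinite family $\Lambda_k$), a knot already known from Melvin--Shrestha to have several linearized homologies over $\Z/2$, which is what signals the singular point of the augmentation variety where the free parameter lives.

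A second, concrete error: your claim that ``only chords in degrees $0$ and $1$ are needed, which makes the pattern much easier to realize'' is false as a design constraint and would have steered the search toward knots for which the statement provably fails. If all Reeb chords of $\Lambda$ have degree $0$ or $1$, then $\Lambda$ is positive in the sense of the paper's final section, and Sabloff duality forces $\LCH_*^{\e}(\Lambda)$ to be $\Z$ in degree $1$, $\Z^{\tb(\Lambda)+1}$ in degree $0$, and torsion-free. More directly: if $\LCH_*^{\e_n}(\Lambda)$ had a lone $\Z/n$ in degree $0$ and were otherwise free and supported in degrees $0,1$, then for a prime $p\mid n$ the universal coefficient theorem would give an extra copy of $\Z/p$ in $\LCH_1^{\e}(\Lambda;\Z/p)$ with nothing in degree $-1$ to match it, contradicting $\LCH_1^\e(\Lambda;\k)\cong\LCH_{-1}^\e(\Lambda;\k)\oplus\k$. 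So torsion must occur in dual pairs of degrees $i$ and $-1-i$, which requires negative-degree chords; indeed the paper's example has a degree $-1$ chord $a_{11}$ with $\d a_5=-a_{11}a_1$, and the torsion appears in degrees $0$ and $-1$. Finally, your last paragraph on fillings overstates what the Seidel isomorphism provides over $\Z$: the versions in the literature are stated over a field, and the paper's non-geometricity argument (for its Proposition on fillings, not the statement at hand) instead reduces mod $p$ and compares $\Z/p$-dimensions with $H_*(L,\Lambda;\Z/p)$ for the genus forced by the Thurston--Bennequin number, using the Gao--Rutherford version of the isomorphism.
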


\noindent
To prove Proposition~\ref{prop:main}, we give a particular Legendrian knot, of topological type $m(8_{21})$, for which torsion exists for specific augmentations (Section~\ref{ssec:821-ex}). Afterwards, we generalize this example to larger families of knots which have the same property.

By the universal coefficient theorem, Proposition~\ref{prop:main} implies that the linearized contact cohomology for these augmentations (which is more directly connected to sheaf theory; cf.~\cite{STZ,NRSSZ}) also has torsion.

As a consequence of the proof of Proposition~\ref{prop:main}, we also establish an analogue of Golovko's result for Legendrians in $\R^3$.

\begin{proposition}\label{prop:geography}
For any finitely generated abelian group $G$ and any $k \in \Z$ with $k\neq 0,1$, there is a Legendrian knot $\Lambda$ in $\R^3$ and a $\Z$-valued augmentation $\e$ of the Chekanov--Eliashberg DGA of $\Lambda$ such that $\LCH_k^\e(\Lambda) \cong G$.
\end{proposition}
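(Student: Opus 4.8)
The plan is to leverage the torsion-producing construction behind Proposition~\ref{prop:main}, together with standard connected-sum (or ``stacking'') operations on Legendrian knots, to realize an arbitrary finitely generated abelian group $G$ in a single prescribed degree $k$. First I would recall that any finitely generated abelian group decomposes as $G \cong \Z^r \oplus \Z/n_1 \oplus \cdots \oplus \Z/n_s$, so it suffices to produce each summand separately and then assemble them. The free part $\Z^r$ is easy to realize: a single Reeb chord of the appropriate degree contributing to homology (for instance, coming from a trivial Legendrian unknot summand, whose linearized contact homology is well understood) gives a $\Z$ in a chosen degree, and one can take $r$ copies. The torsion summands $\Z/n_i$ are exactly what the $m(8_{21})$-type examples of Proposition~\ref{prop:main} provide, since that result already gives, for each $n \geq 2$, an augmentation $\e_n$ with a $\Z/n$ summand in $\LCH_*^{\e_n}(\Lambda)$.

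Next I would combine these pieces. The natural tool is the behavior of the Chekanov--Eliashberg DGA and its linearized contact homology under Legendrian connected sum: if $\Lambda_1$ and $\Lambda_2$ are Legendrian knots with augmentations $\e_1,\e_2$, then there is a connected-sum Legendrian $\Lambda_1 \# \Lambda_2$ with an induced augmentation whose linearized contact homology is, up to a predictable degree shift and possibly an extra generator accounting for the connect-sum chord, the direct sum of the two linearized homologies. I would make precise the exact statement of this K\"unneth-type formula over $\Z$ (tracking the grading conventions carefully so that all the summands I want can be arranged to land in the single degree $k$), and then iterate it to build a knot whose linearized contact homology in degree $k$ is $\Z^r \oplus \bigoplus_i \Z/n_i \cong G$. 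The hypothesis $k \neq 0,1$ is presumably what gives me the freedom to place all summands in that degree without collisions with the unavoidable low-degree classes (e.g.\ the fundamental-class-type generators in degrees $0$ and $1$ that any nonempty linearized complex carries), and I would verify that the degree shifts available through Legendrian stabilization or choice of the base examples indeed let me hit any such $k$.

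The main obstacle I anticipate is controlling the differential over $\Z$ precisely enough under connected sum to guarantee that the homology splits as a clean direct sum with no unwanted interaction terms, and in particular that no new torsion or cancellation is introduced by the connect-sum Reeb chords. Over $\Z/2$ such K\"unneth formulas are standard, but with $\Z$ coefficients one must check that the coherent orientations (and hence the signs in the differential) are compatible with the connected-sum operation, so that the direct-sum decomposition of the linearized complex is a decomposition of $\Z$-complexes and not merely of their mod-$2$ reductions. I would handle this by appealing to the explicit sign conventions of \cite{ENS,EES-ori} and verifying that the connect-sum chord contributes a differential with coefficient $\pm 1$, so that it either cancels cleanly against a partner generator or contributes a free summand, but in neither case creates spurious torsion. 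Finally, I would assemble the verification that the resulting knot and augmentation satisfy $\LCH_k^\e(\Lambda) \cong G$ exactly, completing the argument.
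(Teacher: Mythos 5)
Your overall skeleton---realize each cyclic summand by a torsion example, then assemble with Legendrian connected sums, using a direct-sum formula for $\LCH$ away from degrees $0,1$---is exactly the paper's strategy (following Melvin--Shrestha), and your anticipation that the exclusion $k\neq 0,1$ is what makes the connect-sum formula clean is correct: the paper shows $\LCH_i^{\e\#\e'}(\Lambda\#\Lambda')\cong \LCH_i^\e(\Lambda)\oplus\LCH_i^{\e'}(\Lambda')$ for all $i\neq 0,1$, on the nose, with no degree shift (the extra connect-sum chord $c$ has $|c|=0$), and the sign issue you worry about is settled by the explicit description of the connected-sum differential: replace $t$ by $c$ in $\d_\Lambda$, replace $t'$ by $-tc$ in $\d_{\Lambda'}$, and set $\d c=0$.

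However, there is a genuine gap at the heart of your argument: you have no valid mechanism for placing a summand in an \emph{arbitrary} prescribed degree $k$. Proposition~\ref{prop:main} as you invoke it gives a $\Z/n$ summand in \emph{some} degree (for the $m(8_{21})$ example, only in degrees $0$ and $-1$, both excluded from the statement); connected sum does not shift gradings; and your proposed fallback of ``degree shifts through Legendrian stabilization'' fails outright, since stabilized Legendrian knots have acyclic Chekanov--Eliashberg DGAs and hence admit no augmentations whatsoever. The paper fills this gap with the family $\Lambda_k$ of Section~\ref{ssec:family}, whose augmentations $\e_n$ are computed to have $\LCH_*^{\e_n}(\Lambda_k)\cong \Z/n$ precisely in degrees $k$ and $-k-1$ (plus $\Z$-summands only in degrees $0,1$), so that connect sums of the $\Lambda_{|k|}$'s (or $\Lambda_0$ for degree $-1$) hit every degree $\neq 0,1$. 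Your treatment of the free part has the same defect: the standard Legendrian unknot's linearized homology is a single $\Z$ in degree $1$---a degree excluded by hypothesis---and no unknot summand can produce $\Z$ in degree $k\neq 1$. The paper's fix is the observation that the formula $\LCH_k^{\e_n}(\Lambda_k)\cong\Z/n$ makes sense at $n=0$, where $\Z/0=\Z$: one uses $\e_0$ on copies of $\Lambda_k$ for the free summands and $\e_{n_i}$ for the torsion summands, and connect-sums them all. Without a family like $\Lambda_k$ (or an equivalent degree-placement device), your proof cannot be completed as written.
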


\noindent
Here the index $k$ is the grading on Legendrian contact homology; see Section~\ref{ssec:background}. The exclusion $k\neq 0,1$ is necessary because Sabloff duality \cite{Sabloff:duality,EES:duality} imposes constraints on $\LCH_k^\e(\Lambda)$ for $k=0,1$. Proposition~\ref{prop:geography} is proven in Section~\ref{sssec:Lambdak-linhom}.

We also use our torsion examples to explore the question of when augmentations are geometric, in the sense that they are induced by an exact Lagrangian filling. 

\begin{proposition}
For the family of Legendrian knots $\Lambda_k$ defined in Section~\ref{ssec:family}, there are augmentations to $\Z$ that are not geometric, even though their mod $2$ reductions to $\Z/2$ are geometric.
\label{prop:geom}
\end{proposition}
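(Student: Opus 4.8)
The plan is to use the Seidel isomorphism of Ekholm--Honda--K\'alm\'an \cite{EHK} as an obstruction to geometricity. If a $\Z$-valued augmentation $\e$ of $\A_{\Lambda}$ is geometric, i.e.\ induced by an exact Lagrangian filling $L$, then $\LCH_*^\e(\Lambda)$ is isomorphic, up to a grading shift, to the integral homology of $L$. Since an exact Lagrangian filling of a Legendrian knot in the standard contact $\R^3$ is a compact orientable surface, its homology is a free abelian group, so a geometric augmentation necessarily has torsion-free linearized contact homology over $\Z$. Consequently, any $\Z$-valued augmentation whose linearized contact homology contains torsion cannot be geometric. The augmentations $\e_n$ produced in the proof of Proposition~\ref{prop:main} have exactly this feature, so once we know that (suitable members of) the family $\Lambda_k$ carry such $\e_n$, the ``not geometric'' half of the statement is immediate.

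For the second half, I would show that the mod $2$ reduction $\overline{\e_n}$ of $\e_n$ is induced by an exact Lagrangian filling of $\Lambda_k$. First I would exhibit such a filling: the knots $\Lambda_k$ of Section~\ref{ssec:family} are constructed to be fillable, so one produces a decomposable exact Lagrangian filling $L$ by a sequence of elementary cobordisms (pinch/saddle moves together with a minimum disk) reducing $\Lambda_k$ to a Legendrian unknot. By \cite{EHK} such a decomposable filling induces a $\Z/2$-augmentation $\e_L$ of the Chekanov--Eliashberg DGA over $\Z/2$, computable combinatorially from the sequence of moves. The remaining task is then to verify that $\e_L$ coincides, as an augmentation over $\Z/2$, with $\overline{\e_n}$---equivalently, that $\overline{\e_n}$ lies in the DGA-homotopy class of a filling-induced augmentation, so that geometricity of $\overline{\e_n}$ follows from that of $\e_L$.

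The main obstacle is precisely this matching step. Over $\Z$ the augmentation $\e_n$ is pinned down by the torsion computation, while over $\Z/2$ the filling-induced augmentation $\e_L$ is pinned down by the geometry of the cobordism; these are computed in genuinely different ways, with no formal reason to agree. I would approach the comparison by writing both augmentations explicitly on the finite set of Reeb-chord generators of $\A_{\Lambda_k}$---reducing $\e_n$ modulo $2$ on one side and applying the combinatorial formula for $\e_L$ on the other---and then checking that the two $\Z/2$-valued augmentations agree on every generator, allowing for a tame automorphism of the DGA arising from a different choice of decomposition of the filling. Since filling-induced augmentations and linearized contact homology are invariant only up to the appropriate DGA-homotopy, it suffices to show that $\overline{\e_n}$ and $\e_L$ are DGA-homotopic, and I expect the small number of generators of $\A_{\Lambda_k}$ to make this a finite, if delicate, verification. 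Once this identification is in place, Proposition~\ref{prop:geom} follows: $\overline{\e_n}$ is geometric, whereas $\e_n$ is not, since its integral linearized contact homology contains torsion and so cannot be the homology of an orientable filling surface.
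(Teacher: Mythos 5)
Your overall strategy coincides with the paper's---torsion detected through a Seidel-type isomorphism obstructs geometricity over $\Z$, and an explicit decomposable filling realizes the mod $2$ reduction---but your first half rests on a statement that is not available in the literature you cite. You assert that a $\Z$-geometric augmentation forces $\LCH_*^{\e}(\Lambda)$ to be the integral homology of the filling, attributing this to \cite{EHK}; however, \cite{EHK} works over $\Z/2$, and the version of the Seidel isomorphism the paper can invoke (Gao--Rutherford, \cite[Proposition 3.4]{GR}, quoted as Proposition~\ref{prop:seidel}) is stated only for augmentations valued in a field $\k$. An integral Seidel isomorphism, which is exactly what your freeness argument needs, is not established by the cited sources. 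The paper sidesteps this: if $\e_n$ were geometric over $\Z$, then its mod $p$ reduction $\e_{n;p}$ would be geometric over $\Z/p$; for $p \mid n$ the computation of Section~\ref{sssec:Lambdak-linhom} gives $\LCH_*^{\e_{n;p}}(\Lambda_k;\Z/p)$ of total dimension $7$, while by \cite{Chantraine-cobordism} any filling satisfies $2g(L)-1=\tb(\Lambda_k)=1$, so $L$ is a punctured torus and $H_*(L,\Lambda;\Z/p)$ has total dimension $3$, contradicting Proposition~\ref{prop:seidel}. Note that once you pass to field coefficients your ``torsion-free'' argument evaporates, so the genus constraint from \cite{Chantraine-cobordism}---absent from your sketch---becomes an essential ingredient rather than an optional one.

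For the second half, you correctly flag the matching of $\e_L$ with the mod $2$ reduction as the crux, but you leave it as an unresolved ``delicate verification,'' and you omit a genuine geometric prerequisite: to realize the saddle cobordisms as exact Lagrangians one must check that the pinched Reeb chords are \emph{contractible} in the sense of \cite{EHK}, which the paper verifies for the crossings $a_1$ and $a_3$ by an explicit front manipulation (Figure~\ref{fig:resolution}). With the filling obtained by pinching $a_1$ and $a_3$ down to a standard unknot and capping with a disk, the matching is then not delicate and requires no DGA homotopy: the combinatorial formula \cite[Proposition 6.18]{EHK} gives $\e_L(a_1)=\e_L(a_3)=1$, and the augmentation equations for $\Lambda_k$ (namely $\e(t)=-1$, $\e(a_{10})=\cdots=\e(a_{k+10})=1$, and $\e(a_1)+\e(a_3)+\e(a_1)\e(a_2)\e(a_3)=1$) show that a $\Z/2$-valued augmentation with $\e(a_1)=\e(a_3)=1$ is \emph{unique}, since these values force $\e(a_2)=1$. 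As $\e_{n;2}$ takes exactly these values when $n$ is odd, one gets $\e_L=\e_{n;2}$ on the nose---which also pins down the hypothesis ($n$ odd, $n\neq\pm1$) that your ``suitable members'' leaves unspecified. Finally, be aware that the paper's definition of geometric demands equality with the filling-induced augmentation, so establishing only that $\e_L$ and $\e_{n;2}$ are DGA-homotopic, as you propose, would be strictly weaker than what that definition requires.
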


\noindent
The obstruction to augmentations being geometric is provided by a version of the Seidel isomorphism established by Gao and Rutherford \cite{GR}.

We conclude the paper by discussing which knots in the Legendrian knot atlas \cite{atlas} do and do not have torsion, and some speculation based on these observations.

\begin{remark}
  In this paper, we consider contact homology linearized with respect to
  a single augmentation. One can also linearize with respect to two
  different augmentations. In terms of the augmentation
  category of~\cite{BC:aug-cat,CDGG:aug-cat,NRSSZ}, these
  bilinearized contact homologies are the morphism spaces between
  different objects, while the linearized contact homologies considered in this
  paper are the endomorphism spaces of a single object. While writing
  this paper, we learned that Fr\'ed\'eric Bourgeois and Salammbo Connolly have completely solved
  the geography problem for bilinearized contact homologies in
  dimension $3$---that is, which graded homology groups can appear as the bilinearized contact homology of some Legendrian knot with respect to two augmentations.  They also obtain some partial results in the linearized case, so that there is some overlap between Bourgeois--Connolly’s results and ours.
\end{remark}
\subsection*{Acknowledgments} We thank Fr\'ed\'eric Bourgeois, Dan
Rutherford, and Josh Sabloff for helpful conversations, and the referee for many further suggestions. 
The results in
this paper were observed while working on a related project with
Sucharit Sarkar, and we are particularly grateful to him for his
insights and suggestions. The first author conducted this research
while visiting Stanford University and, briefly, UCLA, and thanks both
for their hospitality.

\section{Results}

\subsection{Legendrian contact homology, augmentations, and linearized homology}
\label{ssec:background}

We start with a brief review of Legendrian contact homology, mostly to fix notation and conventions. For more details, see e.g.\ \cite{ENsurvey}.

Throughout this paper, we consider only Legendrian knots in $\bigl(\R^3,\xi_{\mathit{std}}=\ker(dz-y\,dx)\bigr)$.
We will usually represent a Legendrian knot $\Lambda$ by its \emph{front projection} $\Pi_{xz}(\Lambda)$ in $\R^2_{xz}$. Assuming that $\Lambda$ is generic, the front projection $\Pi_{xz}(\Lambda)$ has only two types of singularities: transverse double points (\emph{crossings}) and semicubical cusps (\emph{left cusps} or \emph{right cusps} depending on whether the cusp is a local minimum or local maximum for the $x$ coordinate, respectively). It is customary to omit the crossing information at double points of $\Pi_{xz}$, but this can be recovered from the front projection by noting that the strand with lower slope crosses over the strand with higher slope.

All knots in this paper will have rotation number $0$; in terms of the front projection, this means that $\Pi_{xz}(\Lambda)$ has an equal number of up cusps and down cusps (cusps traversed upwards and downwards, respectively, with respect to some orientation of $\Lambda$). We will also restrict our attention to knots whose front projections are \textit{simple} in the sense of \cite{CLI}: this assumption means that all right cusps share the same $x$ coordinate, and makes it easier to compute the differential for Legendrian contact homology.

The Chekanov--Eliashberg DGA is most naturally defined in terms of the \textit{Lagrangian projection} $\Pi_{xy}$ of a Legendrian knot $\Lambda$. Crossings of the knot diagram $\Pi_{xy}(\Lambda)$ are the \emph{Reeb chords} of $\Lambda$ (flows of the Reeb vector field $\partial/\partial z$ with endpoints on $\Lambda$) and generate the DGA. In order to instead produce a DGA associated to a front, we use a procedure called \emph{resolution} from~\cite{CLI} that takes the front projection of $\Lambda$ and turns it into the Lagrangian projection of a knot that is Legendrian isotopic to $\Lambda$. Resolution smooths out left cusps, replaces double points of the front by crossings, and replaces right cusps by loops with a single crossing. There is a one-to-one correspondence between crossings and right cusps of the front of $\Lambda$ and Reeb chords of the resolved diagram, and we will sometimes accordingly abuse language and use ``Reeb chords'' to mean the crossings and right cusps of $\Pi_{xz}(\Lambda)$.

Label the Reeb chords of the front $\Pi_{xz}(\Lambda)$ by $a_1,\ldots,a_n$, and place a basepoint somewhere along $\Pi_{xz}(\Lambda)$ away from the crossings; we introduce one more indeterminate $t$ corresponding to this basepoint. We then construct the free unital $\Z$-algebra
\[
\A_\Lambda = \Z\langle a_1,\ldots,a_n,t^{\pm 1}\rangle,
\]
generated by $a_1,\ldots,a_n,t^{\pm 1}$, with no relations except for $t\cdot t^{-1} = t^{-1}\cdot t = 1$. The algebra $\A_\Lambda$ is graded by setting $|t|=|t^{-1}|=0$ and assigning a particular grading $|a_i| \in \Z$ to each $a_i$: if $a_i$ is a right cusp then $|a_i|=1$, while if $a_i$ is a crossing then $|a_i|$ is the difference between the number of up cusps and down cusps traversed by a path along $\Pi_{xz}(\Lambda)$ from the undercrossing of $a_i$ (the strand of the crossing with higher slope) to the overcrossing of $a_i$ (the strand with lower slope). The fact that these gradings are well-defined and $\Z$-valued uses our assumption that the rotation number vanishes. 

For the definition of the differential, see e.g.~\cite{CLI} or~\cite{ENsurvey}. Since signs are important for our computations, we note that our sign conventions follow those references: the sign of a term in $\d_\Lambda$ is $(-1)$ raised to the number of corners of the disk that occur at a crossing of even degree and occupy the downward-facing quadrant at that crossing. 
We will usually place the basepoint at some right cusp $a_i$ of $\Pi_{xz}(\Lambda)$, in which case 
the only place $t$ appears in $\d a_j$ for any $j$ is in $\d a_i$: $\d a_i = t + \cdots$. (The sign of $t$ here chooses an orientation for $\Lambda$, but our results will be unchanged if we reverse the orientation instead.)

An $R$-valued augmentation of $\A_\Lambda$ is a DGA map
\[
\e \co \A_\Lambda \to R
\]
where $R$ is a unital commutative ring lying in degree $0$, with trivial differential. (In this paper, $R$ will usually be $\Z$ or a field $\k$.) Specifically, this means that $\e(1)=1$, $\e(a) = 0$ for any $a\in\A_\Lambda$ of nonzero degree, and $\e\circ\d = 0$.

A fundamental result of Leverson states that for any augmentation $\e$ valued in a field $\k$, $\e(t)=-1$; see~\cite{Leverson}. (This uses the fact that $\Lambda$ is a knot, not a more general link.) It follows that the same holds for $\Z$-valued augmentations, as well.

There is a standard procedure that starts from $(\A_\Lambda,\d)$ equipped with an augmentation and produces a finitely generated complex of free $R$-modules, whose homology is defined to be the linearized contact homology $\LCH_*^{\e}(\Lambda;R)$ of $\Lambda$ with respect to the augmentation $\e$; see e.g.\ \cite{ENsurvey}. Here we present a slight variant that may be useful for computations. Define
\[
\A^R = (R[s]) \langle a_1,\ldots,a_n\rangle = ((\A_\Lambda \otimes \Z[s]) \otimes R)/(t=\e(t))
\]
where $s$ should be viewed as a parameter. There is an $R$-algebra map $\phi^\e \co \A^R \to \A^R$ defined by
$\phi^\e(1)=1$, $\phi^\e(s) = s$, and 
\[
\phi^\e(a_i) = s a_i + \e(a_i)
\]
for all $i$. It follows from the fact that $\e\circ\d=0$ that $(\phi^\e(\d a_i))|_{s=0} = 0$ for all $i$. Define
\[
\d^\e a_i = \left.\frac{d}{ds}\right|_{s=0} \phi^\e(\d a_i).
\]
By construction, $\d^\e a_i$ is linear in $a_1,\ldots,a_n$ for each $i$. Thus,
\[
\d^\e \co V \to V,
\]
where $V$ is the free graded $R$-module generated by $a_1,\ldots,a_n$, and
$(\d^\e)^2=0$ since $\d^2=0$. We now define the linearized contact homology of $\Lambda$ with respect to $\e$ to be
\[
\LCH_*^{\e}(\Lambda;R) = H_*(V,\d^\e).
\]
When $R=\Z$, we will suppress it from this notation.

A fundamental property of linearized contact homology is \emph{Sabloff duality}, a relationship between linearized contact homology and its dual akin to Poincar\'e duality for ordinary homology. Specifically, given a Legendrian $\Lambda$ in $\R^3$ and a $\k$-valued augmentation $\e$ for $\Lambda$ for some field $\k$, Sabloff duality states that there are non-canonical isomorphisms
\begin{align}
  \LCH_i^\e(\Lambda;\k)&\cong \LCH_{-i}^\e(\Lambda;\k) & i\neq 1\label{eq:Sabloff-generic}\\
  \LCH_1^\e(\Lambda;\k)&\cong \LCH_{-1}^\e(\Lambda;\k)\oplus\k\label{eq:Sabloff-1}.
\end{align}
(A canonical version of the first line identifies $\LCH_i^\e$ with the dual of $\LCH_{-i}^\e$, and the second becomes a short exact sequence.) This was first proved in the case $\k=\Z/2$ by Sabloff, in~\cite{Sabloff:duality}, and then extended to higher dimensions and more general rings by Ekholm--Etnyre--Sabloff, in~\cite{EES:duality}. The form we have used here requires an understanding of what they call the \emph{manifold classes}, which follows either from Sabloff's original argument or from Remark 5.6 of~\cite{EES:duality}.

\subsection{Torsion for \texorpdfstring{$m(8_{21})$}{the 8,21 knot}}
\label{ssec:821}

In this section we consider the Legendrian knot $\Lambda_0$ shown in Figure~\ref{fig:821}. This knot is of topological type $m(8_{21})$ and was studied by Melvin and Shrestha, who showed in \cite{MS} that it has two different linearized contact homologies corresponding to different augmentations over $\Z/2$. (The depiction of $\Lambda_0$ in \cite[Figure~2]{MS} differs from ours by reflection in the $z$ axis, but the two are Legendrian isotopic: the contactomorphism $(x,y,z) \mapsto (-x,-y,z)$, which in the front projection is reflection in the $z$ axis, is isotopic to the identity map through contactomorphisms.)

\subsubsection{Linearized contact homology for $m(8_{21})$}
\label{ssec:821-ex}

\begin{figure}
\labellist
\small\hair 2pt
\pinlabel $a_1$ at 27 74
\pinlabel $a_2$ at 93 72
\pinlabel $a_3$ at 116 72
\pinlabel $a_4$ at 27 44
\pinlabel $a_5$ at 93 44
\pinlabel $a_6$ at 117 44
\pinlabel $a_7$ at 45 58
\pinlabel $a_8$ at 148 82
\pinlabel $a_9$ at 148 52
\pinlabel $a_{10}$ at 148 21
\pinlabel $a_{11}$ at 73 58
\endlabellist
\centering
\includegraphics[height=2in]{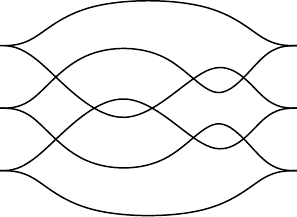}
\caption{
\textbf{The knot $\Lambda_0$.} This knot appears as $m(8_{21})$ in the Legendrian knot atlas, and has this topological knot type.}
\label{fig:821}
\end{figure}

The crossings and right cusps of $\Lambda_0$ are labeled in Figure~\ref{fig:821}, and are graded as follows:
\begin{align*}
|a_7|=|a_8|=|a_9|=|a_{10}| &= 1 \\
|a_1|=|a_2|=|a_3|=|a_4|=|a_5|=|a_6| &= 0 \\
|a_{11}| &= -1.
\end{align*}
We place a basepoint at the cusp $a_8$.

The differential $\d$ on the Chekanov--Eliashberg DGA $\A_{\Lambda_0}$ is:
\begin{gather*}
  \begin{aligned}
\d a_2 &= a_4 a_{11}\qquad & \d a_8 &= t+a_1+a_3+a_1a_2a_3+a_7a_{11}a_3 \\
\d a_5 &= -a_{11} a_1 & \d a_9 &= 1-(1+a_3a_2)a_1a_6-a_3(a_4+a_6+a_4a_5a_6) \\
\d a_7 &= -a_1a_4 &
\d a_{10} &= 1-a_4-a_6-a_6a_5a_4-a_6a_{11}a_7
\end{aligned}\\
\d a_1 = \d a_3 = \d a_4 = \d a_6 = \d a_{11} = \d t = \d t^{-1} = 0.
\end{gather*}

Let $R$ be a (unital) integral domain; of interest to us are the cases where $R$ is $\Z$ or a field. A (graded) augmentation $\e \co \A_{\Lambda_0} \to R$ is determined by the $7$-tuple $(\e(a_1),\ldots,\e(a_6),\e(t)) \in R^7$. By solving the system of equations that arise from setting $\epsilon\circ\d = 0$ ($\e(a_1)\e(a_4)=-\e(\d a_7)=0$, etc.), we find that
$\e$ is an augmentation if and only if one of the following two lines holds:
\begin{align*}
\e(t) &= -1 & \e(a_4) &= 0 &\quad \e(a_6) &= 1 & \quad \e(a_1)+\e(a_3)+\e(a_1)\e(a_2)\e(a_3)&= 1 \\
\e(t) &= -1 & \e(a_1) &= 0 &\quad \e(a_3) &= 1 & \quad \e(a_4)+\e(a_6)+\e(a_4)\e(a_5)\e(a_6)&= 1.
\end{align*}

In particular, for any $n\in\Z$, there is an augmentation $\e_n \co \A_{\Lambda_0} \to \Z$ defined by
\[
\e_n(a_1,a_2,a_3,a_4,a_5,a_6,t) = (n,-1,1,0,0,1,-1).
\]
If we write $V$ for the graded $\Z$-module generated by $a_1,\ldots,a_{11}$, then $\d$ and $\e_n$ induce a linear differential $\d^{\e_n} \co V \to V$ as described in Section~\ref{ssec:background}, and the homology of $(V,\d^{\e_n})$ is $\LCH_*^{\e_n}(\Lambda_0)$.
Specifically, we have
\begin{gather*}
\d^{\e_n} a_5 = -n a_{11} \qquad \d^{\e_n} a_7 = -n a_4 \qquad \d^{\e_n} a_8 = na_2-(n-1)a_3 \\
\d^{\e_n} a_9 = -a_4-a_6-na_2+(n-1)a_3 \qquad \d^{\e_n} a_{10} = -a_4-a_6 \\
\d^{\e_n} a_1 = \d^{\e_n} a_2 = \d^{\e_n} a_3 = \d^{\e_n} a_4 = \d^{\e_n} a_6 = \d^{\e_n} a_{11} = 0
\end{gather*}
and thus
\[
\LCH_*^{\e_n}(\Lambda_0) \cong \begin{cases}
\Z & *=1 \\
\Z^2 \oplus \Z/n & *=0 \\
\Z/n & *=-1 \\
0 & \text{otherwise.}
\end{cases}
\]
This proves Proposition~\ref{prop:main}.

\subsubsection{Geometric motivation}
\label{sssec:821geom}

Here we discuss a geometric reason why the existence of torsion for $m(8_{21})$, which was initially surprising to the authors, could have been anticipated.

For a general Legendrian knot $\Lambda$ and any field $\k$, the set of augmentations from the DGA $\A_{\Lambda}$ to $\k$ forms a variety over $\k$, the \textit{augmentation variety} $\Aug(\Lambda,\k)$. As mentioned above, by \cite{Leverson}, any augmentation $\e$ must satisfy $\e(t) = -1$. Thus, if $a_1,\ldots,a_\ell$ are the degree-$0$ Reeb chords of $\Lambda$, an augmentation is uniquely determined by $(\e(a_1),\ldots,\e(a_\ell))$, and we can view
\[
\Aug(\Lambda,\k) \subset \k^\ell.
\]
This is a variety because it is the vanishing set of a collection of polynomials in $a_1,\ldots,a_\ell$ given by $\d a_i$ where $a_i$ ranges over all degree-$1$ generators of $\A_\Lambda$.

Given an augmentation $\e$ viewed as a point in $\Aug(\Lambda,\k)$, we can consider the Zariski tangent space to the augmentation variety at the point $\e$, $T_\e\Aug(\Lambda,\k)$. It is an exercise in algebra to check that
\[
T_\e\Aug(\Lambda,\k) \cong \ker((\d^\e)^* \co A_0^* \to A_1^*)
\]
where $A_0$ and $A_1$ are the $\k$-vector spaces generated by the degree $0$ and degree $1$ Reeb chords of $\Lambda$, respectively. In the special case when $\Lambda$ has no degree $-1$ Reeb chords, $A_{-1}=0$ and so $T_\e\Aug(\Lambda,\k)$ is isomorphic to $\LCH^0_\e(\Lambda;\k)$, the degree-$0$ linearized contact cohomology of $\Lambda$ with respect to $\e$ with coefficients in $\k$. Even when $\Lambda$ does have degree $-1$ Reeb chords, there is a surjection $T_\e\Aug(\Lambda,\k) \twoheadrightarrow \LCH^0_\e(\Lambda;\k)$. We summarize this by the heuristic ``the larger the Zariski tangent space at $\e$, the larger the linearized contact (co)homology'': this slogan can be made more precise but will suffice for our purposes. Note that the homology $\LCH_0^\e(\Lambda;\k)$ and cohomology $\LCH^0_\e(\Lambda;\k)$ are isomorphic by the universal coefficient theorem.

In the case that $\Lambda=\Lambda_0$, if we use the obvious coordinates $a_1,\ldots,a_6$ on $\k^6$, then
\[
\Aug(\Lambda_0,\k) = V_1 \cup V_2 \subset \k^6
\]
where
\begin{align*}
V_1 &= \{a_4=0,~a_6=1,~a_1+a_3+a_1a_2a_3 = 1\} \\
V_2 &= \{a_1=0,~a_3=1,~a_4+a_6+a_4a_5a_6=1\}.
\end{align*}
Note that $V_1$ and $V_2$ are smooth $3$-dimensional subvarieties of $\Aug(\Lambda_0,\k)$ that intersect in the $2$-dimensional subvariety $V_1\cap V_2 = \{a_1=0,~a_3=1,~a_4=0,~a_6=1\}$. The Zariski tangent space to a point $\e$ in $\Aug(\Lambda_0,\k)$ is larger for $\e \in V_1\cap V_2$ (where it has dimension $4$) than for $\e \in V_1\setminus V_2$ or $\e\in V_2 \setminus V_1$ (where it has dimension $3$). Consistent with the heuristic above, this is borne out in linearized contact homology: one can calculate that
\[
\LCH_0^\e(\Lambda_0;\k) \cong \begin{cases} \k^4 & \e\in V_1\cap V_2 \\
\k^2 & \e\in (V_1\setminus V_2) \cup (V_2\setminus V_1).
\end{cases}
\]

Consider the augmentation $\e_n \co \A_{\Lambda_0} \to \Z$ described in Section~\ref{ssec:821-ex}. For any prime $p$, we can compose $\e_n$ with the projection $\Z \to \Z/p$ to get the mod $p$ reduction $\e_{n;p} \co \A_{\Lambda_0} \to \Z/p$. Then $\e_{n;p}$ is in the component $V_1$ of $\Aug(\Lambda_0,\Z/p)$ for all $p$, and is also in $V_2$ if and only if $p\,|\,n$. Thus,
\begin{equation}\label{eq:LCH-mod-p}
\LCH_0^{\e_{n;p}}(\Lambda_0;\Z/p) \cong \begin{cases} (\Z/p)^4 & p\,|\,n \\
(\Z/p)^2 & p\nmid n.
\end{cases}
\end{equation}
However, the complex whose homology yields $\LCH_*^{\e_{n;p}}(\Lambda_0;\Z/p)$ is precisely the tensor product of the complex whose homology yields $\LCH_*^{\e_n}(\Lambda_0)$ with $\Z/p$. By the universal coefficient theorem, Formula~\eqref{eq:LCH-mod-p} forces either $\LCH_0^{\e_n}(\Lambda_0)$ or $\LCH_{-1}^{\e_n}(\Lambda_0)$ to have $p$-torsion if $p\,|\,n$; and indeed both of these groups have $p$-torsion.

\subsection{Torsion for \texorpdfstring{the family $\Lambda_k$}{an infinite family}}
\label{ssec:family}

The knot $\Lambda_0$ is the smallest of a family of Legendrian knots
with augmentations whose linearized contact homology contains
torsion. Here we describe the rest of the family.

For $k \geq 1$, let $\Lambda_k$ denote the Legendrian knot shown in Figure~\ref{fig:945-family}. The case $k=0$ is exactly the $m(8_{21})$ knot considered in Section~\ref{ssec:821}; however, restricting to $k \geq 1$ actually makes the computation of linearized homology slightly simpler. (The knot $\Lambda_1$ is the second $m(9_{45})$ knot in the Legendrian knot
atlas~\cite{atlas}.)

\begin{figure}
\labellist
\small\hair 2pt
\pinlabel $a_1$ at 27 86
\pinlabel $a_2$ at 94 84
\pinlabel $a_3$ at 115 83
\pinlabel $a_4$ at 26 56
\pinlabel $a_5$ at 44 70
\pinlabel $a_6$ at 68 67
\pinlabel $a_7$ at 85 50
\pinlabel $a_8$ at 147 93
\pinlabel $a_9$ at 147 55
\pinlabel $a_{10}$ at 126 54
\pinlabel $a_{k+10}$ at 124 13
\pinlabel $a_{k+11}$ at 152 40
\pinlabel $a_{2k+11}$ at 153 10
\pinlabel $k$ at 103 34
\endlabellist
\centering
\includegraphics[height=2.5in]{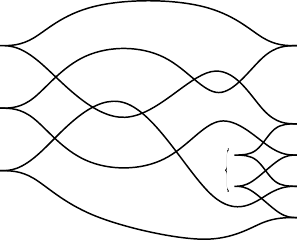}
\caption{
\textbf{The knot $\Lambda_k$ for $k \geq 1$.} The depicted knot is $\Lambda_2$. The Reeb chords $a_{10},\dots,a_{k+10}$ correspond to the crossings on the vertical segment from $a_{10}$ to $a_{k+10}$, and the chords $a_{k+11},\dots,a_{2k+11}$ correspond to the right cusps on the vertical segment from $a_{k+11}$ to $a_{2k+11}$.}
\label{fig:945-family}
\end{figure}

\subsubsection{Linearized homology for $\Lambda_k$}
\label{sssec:Lambdak-linhom}

The Chekanov--Eliashberg DGA of $\Lambda_k$ is $(\A_{\Lambda_k},\d)$, where $\A_{\Lambda_k} = \Z\langle a_1,\ldots,a_{2k+11},t^{\pm 1}\rangle$ and the grading on $\A_{\Lambda_k}$ is given by
\begin{align*}
|a_5| &= k+1 \\
|a_4| &= k \\
|a_8| = |a_9| = |a_{k+11}| = \cdots=|a_{2k+11}| &= 1\\
|a_1|=|a_2|=|a_3|=|a_{10}|=\cdots=|a_{k+10}|=|t^{\pm 1}| &= 0\\
|a_7| &= -k \\
|a_6| &= -k-1.
\end{align*}
The differential is nonzero on the following generators:
\begin{align*}
\d a_5 &= -a_1a_4 \\
\d a_7 &= (-1)^{k+1}a_6a_1 \\
\d a_8 &= t+a_1+a_3+a_1a_2a_3+a_{5}a_{6}a_3 \\
\d a_9 &= 1-(a_1+a_3+a_3a_2a_1+a_3a_4a_{7})a_{10} \\
\d a_{k+11+i} &= 1-a_{10+i}a_{11+i} \qquad \text{for $0\leq i\leq k-1$} \\
\d a_{2k+11} &= 1-a_{k+10}(1+a_6a_5+a_7a_4),
\end{align*}
where we place the basepoint at the cusp $a_8$.

From this, it is easy to check that a graded algebra map $\e \co \A_{\Lambda_k} \to R$ is an augmentation if and only if
$\e(t)=-1$, $\e(a_{10}) = \cdots = \e(a_{k+10})=1$, and 
\[
\e(a_1)+\e(a_3)+\e(a_1)\e(a_2)\e(a_3) =1.
\]
In particular, we can define an augmentation $\e_n$ for any $n\in\Z$ by $\e_n(a_1) = n$, $\e_n(a_2) = -1$, $\e_n(a_3) = 1$. Then the linearized differentials $\d^{\e_n}a_5 = -na_4$ and $\d^{\e_n}a_{7} = (-1)^{k+1}na_6$ produce $n$-torsion in linearized contact homology. To be precise:
\begin{align*}
\LCH_*^{\e_n}(\Lambda_1) &\cong \begin{cases}
\Z \oplus (\Z/n) & *=1 \\
\Z^2 & *=0 \\
\Z/n & *=-2 \\
0 & \text{otherwise}
\end{cases}\\
\LCH_*^{\e_n}(\Lambda_k) &\cong \begin{cases}
\Z & *=1 \\
\Z^2 & *=0 \\
\Z/n & *=k \text{ or } *=-k-1 \\
0 & \text{otherwise}
\end{cases} & k>1.
\end{align*}

\begin{figure}
  \centering
  \includegraphics{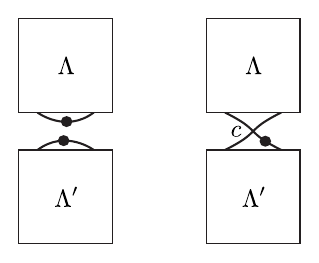}
  \caption{\textbf{The connected sum.} Left: two front
    diagrams. Right: their connected sum. (Compare~\cite[Figure
    2]{EHsum}.) The dots indicate our choice of basepoints; the new
    crossing is called $c$.}
  \label{fig:conn-sum}
\end{figure}

Proposition~\ref{prop:geography}, that any finitely-generated abelian
group can be obtained as a linearized contact homology group in any
grading $\neq 0,1$, now follows easily by using connected sums as in Melvin--Shrestha's
paper~\cite{MS}. Given Legendrian knots $\Lambda$ and $\Lambda'$, we
can form their connected sum $\Lambda\#\Lambda'$ as in
Figure~\ref{fig:conn-sum}. Most Reeb chords for $\Lambda\#\Lambda'$
are either Reeb chords for $\Lambda$ or $\Lambda'$; there is one
additional chord $c$ with $|c|=0$. If we choose basepoints as indicated in Figure~\ref{fig:conn-sum} (rather than at right cusps), the
differential on $\Lambda\#\Lambda'$ is induced from the differentials
on $\Lambda$ and $\Lambda'$ in an easy-to-describe manner: replace $t$ by $c$ in the
differential on $\A_\Lambda$, replace $t'$ by $-tc$ in the
differential on $\A_{\Lambda'}$, and set $\d(c)=0$. Hence,
augmentations for $\Lambda\#\Lambda'$ (sending $t$ to $-1$) correspond
to pairs of augmentations for $\Lambda$ and $\Lambda'$ (sending $t,t',c$ to $-1$), and with respect to this correspondence,
\[
\LCH_i^{\epsilon\#\epsilon'}(\Lambda\#\Lambda')\cong
\LCH_{i}^{\epsilon}(\Lambda)\oplus\LCH_{i}^{\epsilon'}(\Lambda')
\]
for all $i\neq 0,1$. (One can also analyze the behavior in gradings
$0$ and $1$ using Sabloff duality, as in~\cite{MS}.) Thus, to obtain a group
$G=\ZZ^m\oplus \ZZ/n_1\oplus\cdots\oplus\ZZ/n_k$ in grading $i>1$, we
simply take the connect sum of $\Lambda_i$ with itself $m+k$ times,
with augmentation
\[
\overbrace{\epsilon_0\#\cdots\#\epsilon_0}^m\#\epsilon_{n_1}\#\cdots\#\epsilon_{n_k}.
\]

\subsubsection{Geometric motivation}

As in Section~\ref{sssec:821geom} for $m(8_{21})$, one can interpret torsion for $\Lambda_k$ in terms of the augmentation variety, but the interpretation is slightly different for $k\geq 1$ than for $m(8_{21})$. Over a field $\k$, the augmentation variety of $\Lambda_k$ is
\[
\Aug(\Lambda_k,\k) = \{a_1+a_3+a_1a_2a_3=1\} \subset \k^3.
\]

Unlike for $\Lambda_0$, this variety is smooth. However, the linearized homology at an augmentation $\e \in \Aug(\Lambda_k,\k)$ still depends on the point $\e$:
\[
\LCH_*^{\e}(\Lambda_k;\k) \cong \begin{cases}
\k_{k+1}\oplus \k_k\oplus \k_1 \oplus \k_0^2 \oplus \k_{-k} \oplus \k_{-k-1} &
\e(a_1) = 0 \\
\k_1 \oplus \k_0^2 & \e(a_1) \neq 0, \end{cases}
\]
where subscripts denote grading. Thus given a $\Z$-valued augmentation $\e$ with $\e(a_1) = n$ and a prime $p$, the linearized homology over $\Z/p$ at the mod $p$ reduction of $\e$ is larger if $p\,|\,n$ than if $p\nmid n$. As before, it follows from the universal coefficient theorem that $\LCH_*^{\e}(\Lambda_k)$ contains $p$-torsion for any prime $p$ dividing $n$, in line with our calculation in Section~\ref{sssec:Lambdak-linhom}.

\subsubsection{A more general family with torsion}

The family $\Lambda_k$ is itself part of a larger family of Legendrian knots with augmentations whose linearized homology contains torsion. Here we sketch this family.

Suppose that we have two Legendrian knots $\Lambda'$ and $\Lambda''$ with fronts as shown on the left of Figure~\ref{fig:clasp-family}, and $x$ and $y$ are the indicated crossings. Further, suppose that the DGAs $\A_{\Lambda'}$ and $\A_{\Lambda''}$ have $\Z$-valued augmentations $\e'$ and $\e''$ satisfying the following conditions:
\begin{enumerate}
\item
$\e'(x)=n$ for some $n\not\in\{-1,0,1\}$ (so in particular, $|x|=0$);
\item
$\e''(y) = 0$; and
\item $y$ does not appear as a term in the linearized differential $\d(c)$ for any Reeb chord $c$ of $\Lambda''$.
\end{enumerate}

\begin{figure}
\labellist
\small\hair 2pt
\pinlabel $\Lambda'$ at -6 84
\pinlabel $\Lambda''$ at -6 27
\pinlabel $\Lambda$ at 140 56
\pinlabel $x$ at 23 89
\pinlabel $y$ at 23 32
\pinlabel $x$ at 168 68
\pinlabel $y$ at 168 44
\pinlabel $v$ at 173 61
\pinlabel $w$ at 190 61
\pinlabel $z$ at 290 56
\endlabellist
\centering
\includegraphics[width=\textwidth]{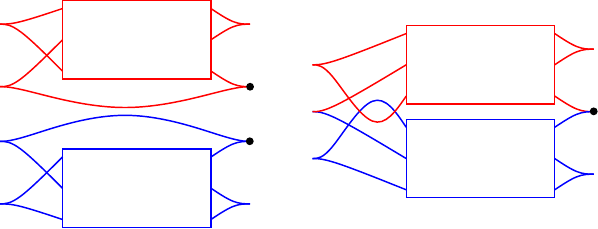}
\caption{
\textbf{The knots $\Lambda'$, $\Lambda''$, and $\Lambda$.} Inside the rectangles, the knots $\Lambda',\Lambda''$ are arbitrary, subject to the condition that they have augmentations $\e',\e''$ with the specified properties. Base points are placed as shown.}
\label{fig:clasp-family}
\end{figure}

Construct the Legendrian knot $\Lambda$ shown on the right of Figure~\ref{fig:clasp-family}; this is the connected sum of $\Lambda'$ and $\Lambda''$, but with an additional clasp. For example, if $\Lambda'$ and $\Lambda''$ are the trefoil and twist knot shown in Figure~\ref{fig:trefoil-twist}, then $\Lambda$ is the knot $\Lambda_k$ considered earlier.

\begin{figure}
\labellist
\small\hair 2pt
\pinlabel $k$ at 117 18
\endlabellist
\centering
\includegraphics[width=3.5in]{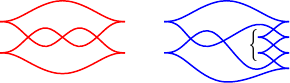}
\caption{
\textbf{The knots inducing the family $\Lambda=\Lambda_k$.} The knot $\Lambda'$ is the trefoil (left) while $\Lambda''$ is the twist knot (right).}
\label{fig:trefoil-twist}
\end{figure}

Let $(\A_{\Lambda},\d_\Lambda)$ denote the DGA of $\Lambda$. 
We can construct an algebra map $\e \co \A_\Lambda \to \Z$ by combining $\e'$ and $\e''$, as follows. Each Reeb chord $a$ of $\Lambda$ corresponds to a Reeb chord of either $\Lambda'$ or $\Lambda''$, except for the chords labeled $v,w,z$ in Figure~\ref{fig:clasp-family}. Define $\e(t)=-1$, $\e(v)=\e(w)=\e(z)=0$, and for all other Reeb chords $a$ of $\Lambda$, define $\e(a)$ to be either $\e'(a)$ or $\e''(a)$, depending on whether $a$ is a Reeb chord of $\Lambda'$ or $\Lambda''$. It can readily be checked that $\e$ is an augmentation of $(\A_\Lambda,\d_\Lambda)$: $\e\circ\d_\Lambda = 0$. Indeed, when applied to chords of $\Lambda$ coming from $\Lambda'$ or $\Lambda''$ (i.e., all chords besides $z,v,w$), $\e\circ\d_\Lambda$ agrees with $\e\circ\d$ on $\A_{\Lambda'}$ or $\A_{\Lambda''}$. We have $\e\circ\d_\Lambda(v)=\e\circ\d_\Lambda(w)=0$ since $\d_\Lambda(v) = -xy$ and $\d_\Lambda(w) = 0$. Finally, $\e\circ\d_\Lambda(z) = 0$: the only contributions to $\e\circ\d_\Lambda(z)$ come from terms in $\d_\Lambda(z)$ not involving any of $v$, $w$, or $y$, and these terms correspond to the product of the terms (other than $t$) in the differentials of the bottom right-cusp of $\Lambda'$ and the top right-cusp of $\Lambda''$ (with no $y$ factor). 

By the given conditions on $\e'$ and $\e''$, we find that $\d^\e(v) = -ny$, and further that this is the only place $y$ appears in $\d^\e(a)$ for any Reeb chord $a$ of $\Lambda$. It follows that $\LCH_*^\e(\Lambda)$ contains a $\Z/n$ summand generated by $y$.

\subsection{Geometric augmentations over \texorpdfstring{$\Z$}{the integers} and \texorpdfstring{$\Z/2$}{the field with two elements}}
\label{ssec:geom}

When a Legendrian knot has an exact, embedded Lagrangian filling, the filling induces an augmentation (more precisely, a family of augmentations) of the Legendrian knot. Here we investigate whether particular augmentations of the Legendrian knot $\Lambda_k$ introduced in Section~\ref{ssec:family} come from a filling, and prove Proposition~\ref{prop:geom}.

Recall that an \textit{exact Lagrangian filling} of a Legendrian knot $\Lambda$ is a Lagrangian surface $L$ in the symplectization $\bigl(\R\times\R^3,d(e^t(dz-y\,dx))\bigr)$ such that, for some sufficiently large $T$, $L \cap \bigl((T,\infty)\times\R^3\bigr) = (T,\infty)\times\Lambda$, equipped with a function $f\co L\to\R$ with $e^t(dz-y\,dx)|_L = df$. In this paper, all of our fillings will be embedded and orientable; henceforth we use the word ``filling'' to denote an exact, embedded, orientable Lagrangian filling.

Contact homology behaves functorially with respect to cobordisms. In the setting of fillings, this result can be stated as follows.

\begin{proposition}[\cite{EHK,Karlsson}]\label{prop:filling}
  A filling $L$ of a Legendrian knot $\Lambda$ induces a DGA map
  $\e_L \co \A_\Lambda \to \Z[H_1(L)]$ (where the right side has trivial differential).
\end{proposition}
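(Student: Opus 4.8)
The plan is to define $\e_L$ by a signed count of rigid holomorphic disks with boundary on the filling $L$, following the functoriality formalism for exact Lagrangian cobordisms. First I would fix an almost complex structure $J$ on the symplectization $\R\times\R^3$ that is compatible with $d(e^t(dz-y\,dx))$ and cylindrical on the positive end. For each Reeb chord generator $a$ of $\A_\Lambda$, I would introduce the moduli space $\mathcal{M}_L(a)$ of $J$-holomorphic disks with boundary on $L$, a single positive puncture asymptotic to $a$, and no negative punctures; closing up the boundary $\partial u\subset L$ of such a disk $u$ with a fixed reference path joining the endpoints of $a$ assigns to $u$ a class $[u]\in H_1(L)$. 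The index formula gives $\dim\mathcal{M}_L(a)=|a|$ in the standard grading conventions, so once transversality is arranged for generic $J$ the rigid disks occur precisely when $|a|=0$. I would then set
\[
\e_L(a)=\sum_{u\in\mathcal{M}_L(a)}(-1)^{\sigma(u)}[u]\in\Z[H_1(L)]
\]
when $|a|=0$, and $\e_L(a)=0$ when $|a|\neq 0$, where $\sigma(u)$ is the sign dictated by coherent orientations of the moduli spaces. Extending multiplicatively produces a graded algebra map into the degree-$0$ ring $\Z[H_1(L)]$, with the basepoint variable $t$ sent to the class in $H_1(L)$ determined by the filling.

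Next I would verify the DGA-map condition. Since the target has trivial differential, this reduces to $\e_L\circ\d=0$, which I would prove by the usual boundary-of-the-moduli-space argument. For a generator $a$ with $|a|=1$, the space $\mathcal{M}_L(a)$ is a compact $1$-manifold, so its signed boundary count vanishes. By SFT compactness its boundary consists of two-level configurations: a rigid disk $v$ in the symplectization with positive puncture at $a$ and negative punctures at $b_1,\dots,b_k$---exactly the disks counted by the coefficients of $\d a$---with a rigid filling disk glued at each $b_i$. Exactness of $L$ rules out disk, sphere, and boundary bubbling, so no further degenerations occur. Gluing identifies this boundary with the terms of $\e_L(\d a)$, while concatenation of boundary arcs shows the classes in $H_1(L)$ combine correctly; hence $\e_L(\d a)=0$, and extending over the algebra gives $\e_L\circ\d=0$.

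The hard part is analytic rather than formal: establishing transversality, pinning down the exact SFT compactness statement for punctured disks with boundary on $L$, and---most delicately---constructing a coherent orientation scheme on the spaces $\mathcal{M}_L(a)$ that is compatible with the orientations defining $\d$, so that the boundary count yields $\e_L\circ\d=0$ over $\Z$ rather than merely over $\Z/2$. For Legendrian knots in $\R^3$ I would carry this out working in the Lagrangian projection and invoking the transversality and compactness results of Ekholm--Honda--K\'alm\'an together with the sign analysis of Karlsson. Since assembling these inputs is exactly the content of \cite{EHK,Karlsson}, the proof ultimately reduces to citing them.
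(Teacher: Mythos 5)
The paper does not actually prove this proposition: it is quoted from \cite{EHK,Karlsson}, and your sketch is, in outline, exactly the argument those references carry out---moduli spaces of rigid disks with boundary on $L$ and a single positive puncture, $H_1(L)$-classes obtained by capping off boundaries, the chain-map identity $\e_L\circ\d=0$ from the boundary of one-dimensional moduli spaces via SFT compactness and gluing, and integer signs from coherent orientations (Karlsson's contribution). So in approach you match the source, and your closing acknowledgment that the analytic content resides in \cite{EHK,Karlsson} is accurate.

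One claim in your construction is wrong as stated, and the paper itself flags the issue in Remark~\ref{rmk:Maslov}. You assert that $\dim\mathcal{M}_L(a)=|a|$, so that rigid disks occur precisely at generators with $|a|=0$, and you define $\e_L(a)=0$ whenever $|a|\neq 0$. But the index of a disk with boundary on $L$ depends not only on $|a|$ but also on the Maslov class of the boundary homotopy class; the two agree for every component of the moduli space only when $L$ has Maslov number $0$. For a filling of nonzero Maslov number, rigid disks can be asymptotic to generators of nonzero (even) $\Z$-degree, and with your definition $\e_L\circ\d$ would fail to vanish: the boundary of the one-dimensional moduli space at a degree-one generator can include broken configurations through such generators, which your map discards. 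The correct definition counts \emph{all} rigid disks regardless of $|a|$; the resulting map is then only guaranteed to preserve the $\Z/2$-grading, and preserves the full $\Z$-grading exactly when the Maslov number of $L$ vanishes. (A second, purely conventional point: $\e_L(t)$ is not simply ``the class determined by the filling'' but carries a sign---consistently with Leverson's theorem, which forces any induced $\Z$-valued augmentation to satisfy $\e(t)=-1$.) Neither issue affects the truth of the proposition, but the grading claim as you wrote it would make your map fail to be a DGA map in the general case.
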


\begin{remark}
For a filling $L$, the DGA map $\e_L$ from Proposition~\ref{prop:filling} may only preserve the $\Z/2$ grading on $\A_\Lambda$ induced from the $\Z$ grading; it is only guaranteed to preserve the full $\Z$ grading if $L$ has Maslov number $0$. See e.g.\ \cite{ENsurvey} for further discussion. \label{rmk:Maslov}
\end{remark}

If we tensor by a field $\k$, we obtain a DGA map $\A_\Lambda \to \k[H_1(L)]$. We can obtain a $\k$-valued augmentation of $\Lambda$ by composing with a homomorphism $\k[H_1(L)] \to \k$; this is equivalent to choosing a rank $1$ local system on $L$, i.e., a group homomorphism $H_1(L) \to \k^\times$. The same construction works over $\ZZ$, where by a local system we simply mean a homomorphism $H_1(L)\to\{\pm 1\}$. Note that when $\k=\Z/2$, there is a unique rank $1$ local system on $L$ and the filling $L$ produces a unique augmentation of $\Lambda$.

\begin{definition}
A $\Z$-valued augmentation $\e \co \A_\Lambda \to \Z$ of $\Lambda$ is \textit{geometric} if there is a filling $L$ of $\Lambda$ and a group homomorphism $H_1(L) \to \{\pm 1\}$ such that $\e$ is equal to the composition
\[
\A_{\Lambda} \stackrel{\e_L}{\longrightarrow} \Z[H_1(L)] \longrightarrow \Z.
\]
Similarly, if $\k$ is a field, then a $\k$-valued augmentation is \textit{geometric} if there is a group homomorphism $H_1(L) \to \k^\times$ such that $\e$ is equal to the composition 
\[
\A_\Lambda \stackrel{\e_L}{\longrightarrow} \Z[H_1(L)] \longrightarrow \k.
\]
\end{definition}

It is known that not all $\k$-valued augmentations of Legendrian knots are geometric, even for $\k=\Z/2$; see e.g.\ \cite{Chantraine-cobordism,EHK,Etgu,GR}. However, less is known for $\Z$-valued augmentations. The rest of this section is devoted to proving:

\begin{proposition}
For any $k \geq 1$ and $n\in\Z$, let $\Lambda_k$ and $\e_n$ be the Legendrian knot and $\Z$-valued augmentation defined in Section~\ref{ssec:family}. If $n$ is odd and $n\neq\pm 1$, then $\e_n : \A_{\Lambda_k}\to \Z$ is not geometric, but the mod $2$ reduction of $\e_n$ is geometric.
\label{prop:geometric}
\end{proposition}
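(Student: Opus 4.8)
The plan is to prove the two halves of Proposition~\ref{prop:geometric} separately, with the non-geometric half being the main content. For the negative statement (that $\e_n$ is not geometric over $\Z$ when $n$ is odd and $n \neq \pm 1$), I would argue by contradiction using the version of the Seidel isomorphism over $\Z$ due to Gao--Rutherford~\cite{GR}. Suppose $\e_n$ were geometric, so that there is a filling $L$ of $\Lambda_k$ and a local system $H_1(L) \to \{\pm 1\}$ inducing $\e_n$ via $\A_{\Lambda_k} \xrightarrow{\e_L} \Z[H_1(L)] \to \Z$. The Seidel isomorphism then identifies $\LCH_*^{\e_n}(\Lambda_k)$ with the homology of $L$ (possibly twisted by the local system, and possibly shifted in grading). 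The key point is that $H_*(L;\Z)$ is the singular homology of an oriented surface with boundary, hence is torsion-free. But from Section~\ref{sssec:Lambdak-linhom} we computed that $\LCH_*^{\e_n}(\Lambda_k)$ contains a $\Z/n$ summand whenever $n \neq 0, \pm 1$. Since $n$ is odd (so $n \neq \pm 1$ guarantees the torsion is nontrivial and, being odd, does not vanish upon any sign-twisting over $\Z$), this contradicts torsion-freeness of $H_*(L;\Z)$, and so no such filling exists.

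The delicate points in this argument, which I would need to address carefully, are threefold. First, the grading: by Remark~\ref{rmk:Maslov}, $\e_L$ preserves only the $\Z/2$ grading unless $L$ has Maslov number $0$; since the Seidel isomorphism compares $\LCH_*^{\e_n}(\Lambda_k)$ with $H_*(L)$, I must confirm that the torsion cannot be an artifact of grading collapse, i.e.\ that a $\Z/n$ summand in the total linearized homology forces torsion in $H_*(L)$ regardless of the precise grading correspondence. Because torsion-freeness of $H_*(L;\Z)$ holds in every degree, any grading shift or collapse is harmless: torsion on the $\LCH$ side cannot be matched by a torsion-free right-hand side. Second, the local system: twisting $H_*(L)$ by a homomorphism $H_1(L) \to \{\pm 1\}$ yields homology with $\Z$-coefficients twisted by a sign representation, and I would note that this is still the homology of a finite CW complex with a rank-one $\Z$-local system, which remains torsion-free for a surface (one can see this via the associated double cover, or directly, since $H_*$ of a surface with twisted $\Z$ coefficients is computed by a chain complex of free $\Z$-modules whose homology has no torsion in the relevant ranks). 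Third, I must ensure that the Gao--Rutherford form of the isomorphism applies to our conventions (orientable embedded exact fillings, $\Z$ coefficients with a $\{\pm 1\}$ local system), which is exactly the setting set up in Section~\ref{ssec:geom}.

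For the positive statement (that the mod $2$ reduction $\e_{n;2}$ is geometric), the strategy is to exhibit an explicit exact Lagrangian filling of $\Lambda_k$ whose induced augmentation, necessarily unique over $\Z/2$, coincides with the reduction of $\e_n$. Over $\Z/2$ there is a unique rank-one local system, so I only need a filling that produces the correct $\Z/2$-valued augmentation. Since $\e_n$ reduces mod $2$ to the augmentation sending $a_1 \mapsto n \bmod 2$, and $n$ is odd means $a_1 \mapsto 1$, I would construct a decomposable filling via a sequence of elementary Lagrangian cobordisms (pinch moves and minimum/saddle cobordisms) realizing the appropriate normal ruling or Maslov-$0$ filling of $\Lambda_k$. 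The cleanest route is to identify a normal ruling of the front of $\Lambda_k$ whose induced $\Z/2$-augmentation matches $\e_{n;2}$, and invoke the correspondence between such rulings (equivalently, decomposable fillings) and geometric $\Z/2$-augmentations; one should check the switched crossings of the ruling reproduce the values $\e(a_1)=1$, $\e(a_3)=1$ and the remaining augmentation data from Section~\ref{sssec:Lambdak-linhom}.

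The main obstacle I expect is the negative direction, and specifically pinning down exactly how the local system and the Maslov-number/grading subtleties interact with torsion-freeness of $H_*(L)$. The heart of the proof is simple---oriented surfaces have torsion-free homology, $\LCH^{\e_n}$ does not---but making the Seidel isomorphism of~\cite{GR} deliver precisely this comparison over $\Z$, with the correct handling of the twist and grading, is where care is required. I do not anticipate difficulty in the existence of the torsion itself, since that is already established in Section~\ref{sssec:Lambdak-linhom}; the work is entirely in certifying that a geometric origin for $\e_n$ would contradict it.
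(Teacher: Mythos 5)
Your negative-direction argument has a genuine gap: it rests on a $\Z$-coefficient Seidel isomorphism, twisted by the $\{\pm 1\}$ local system, which is not available in the literature the paper relies on. The Gao--Rutherford result (Proposition~\ref{prop:seidel}, \cite[Proposition 3.4]{GR}) is stated only for augmentations valued in a field $\k$, and notably its right-hand side is the \emph{untwisted} homology $H_{*+1}(L,\Lambda;\k)$ even when the augmentation involves a local system. Moreover, your backup claim---that homology of a surface with a rank-one $\Z$-local system is torsion-free---is false: already $H_0(S^1;\Z_\rho)\cong\Z/2$ for the sign representation, since the twisted chain complex is $\Z\xrightarrow{\,\cdot 2\,}\Z$; a $\{\pm 1\}$ twist on a surface can produce $2$-torsion (only odd torsion is excluded, via transfer to the double cover). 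You gesture at both issues (''$n$ odd ... does not vanish upon any sign-twisting'') but do not resolve them, and resolving the first would require proving an integral Seidel theorem that the paper deliberately avoids. The paper's route sidesteps all of this: if $\e_n$ were geometric over $\Z$, then for any prime $p\,|\,n$ the mod $p$ reduction $\e_{n;p}$ would be geometric over the field $\Z/p$; the computation in Section~\ref{sssec:Lambdak-linhom} gives $\dim_{\Z/p}\LCH_*^{\e_{n;p}}(\Lambda_k;\Z/p)=7$, while \cite{Chantraine-cobordism} forces any filling to be a punctured torus ($2g(L)-1=\tb(\Lambda_k)=1$), so $H_*(L,\Lambda;\Z/p)$ has total dimension $3$, contradicting Proposition~\ref{prop:seidel}. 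Your torsion-freeness idea can be salvaged in field form---compare the Seidel isomorphism over $\Q$ (dimension $3$) with the one over $\Z/p$, $p\,|\,n$ (dimension $7$), noting these must agree because $H_*(L,\Lambda;\Z)$ is torsion-free---but as written your proof invokes a tool that does not exist in the cited references.

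For the positive direction, your primary plan coincides with the paper's: pinch the contractible Reeb chords $a_1$ and $a_3$ via saddle cobordisms to reach a standard unknot, cap with a disk, and use the combinatorial cobordism map of \cite[Proposition~6.18]{EHK} to see the induced augmentation satisfies $\e(a_1)=\e(a_3)=1$; the paper then observes that these two values \emph{uniquely} determine the augmentation (forcing $\e(a_2)=1$), so $\e=\e_{n;2}$ for $n$ odd. Two cautions: the paper must (and does) verify that $a_1$ and $a_3$ are simultaneously contractible in the sense of \cite{EHK}, a step you should not omit; and your suggested ''cleanest route'' via a correspondence between normal rulings and geometric $\Z/2$-augmentations is unsound---rulings correspond to augmentations, not to fillings, and indeed the very proposition you are proving exhibits augmentations (hence rulings) with no geometric origin, so no such correspondence can be invoked.
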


To prove Proposition~\ref{prop:geometric}, we first show that $\e_n$ is not geometric, and then that its mod $2$ reduction is geometric. For the former, we use the following result, called the \emph{Seidel isomorphism}, versions of which are due to many authors (e.g.,~\cite{EkholmSFT,DR,Karlsson,GR}). The variant that we will use is due to Gao and Rutherford \cite{GR}.

\begin{proposition}[{\cite[Proposition 3.4]{GR}}]
Let $\Lambda$ be a Legendrian knot and let $\e \co \A_\Lambda \to \k$ be a geometric augmentation induced by a filling $L$ of $\Lambda$ equipped with a rank $1$ local system. Then there is a $\Z/2$-graded isomorphism
\label{prop:seidel}
\[
LCH_*^\e(\Lambda;\k) \cong H_{*+1}(L,\Lambda;\k).
\]
\end{proposition}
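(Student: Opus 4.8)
The plan is to prove the Seidel isomorphism by a neck-stretching (SFT degeneration) argument that identifies the linearized complex with a Morse-theoretic model for the relative homology of the filling. First I would unpack the geometric content of the augmentation $\e_L$ supplied by Proposition~\ref{prop:filling}: it is defined by counting rigid $J$-holomorphic disks in the symplectization $\R\times\R^3$ with boundary on $L$ and a single positive puncture asymptotic to a Reeb chord, weighted by the image of their relative homology class under the chosen local system $H_1(L)\to\k^\times$ \cite{EHK}. The linearized differential $\d^\e$ then counts rigid disks with one positive and one negative puncture at Reeb chords, with all remaining positive punctures ``capped'' by the filling via $\e_L$. The goal is to produce a quasi-isomorphism between the linearized complex $(V,\d^\e)$ and a complex whose homology is $H_{*+1}(L,\Lambda;\k)$.

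The core step is an adiabatic/compactness argument. Fix a Morse function $f\co L\to\R$ adapted to the cylindrical end and stretch the neck of the symplectization so that $L$ is pushed far down the concave end. Under this degeneration each rigid disk contributing to $\d^\e$ breaks into an SFT building whose pieces are trivial strips over Reeb chords together with holomorphic disks having boundary on $L$; after a further rescaling near $L$ the latter converge, in the adiabatic limit, to gradient trajectories of $f$ (flow trees in the sense of Ekholm \cite{EkholmSFT}). I would set up the moduli-space correspondence so that the stretched-neck count defining $\d^\e$ matches a count of flow lines on $L$ together with boundary contributions along $\Lambda$, thereby realizing $(V,\d^\e)$ as a shift of the Morse complex of the pair $(L,\Lambda)$. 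The grading shift by $1$ records that the surviving Reeb chord sits at a positive puncture.

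Equivalently, I would route the argument through Lagrangian Floer theory. By Ekholm's theorem \cite{EkholmSFT}, the linearized Legendrian contact cohomology of $\Lambda$ at $\e_L$ is isomorphic to a self-Floer cohomology of $L$ computed in the symplectization, built from $L$ and a pushoff of itself. For an exact, embedded filling this Floer cohomology reduces to ordinary cohomology: exactness forbids the disk bubbling that could deform the Floer differential, so the complex is quasi-isomorphic to the Morse complex of $L$. Converting this into the stated relative homology $H_{*+1}(L,\Lambda;\k)$ then uses duality over $\k$ together with Poincar\'e--Lefschetz duality for the surface $L$ with $\partial L=\Lambda$. The local system enters uniformly as a weighting on both sides, so it suffices to match it at the chain level.

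The main obstacle is the analytic package underlying the degeneration: SFT compactness for the stretched-neck limit, transversality for the relevant moduli spaces of disks and of flow trees, and---crucially for a statement over general $\k$---a coherent choice of orientations so that the signs of the disk counts agree with the signs in the Morse complex of $(L,\Lambda)$ and are compatible with the local-system weights. One must also verify that only the $\Z/2$ grading survives in general (cf.\ Remark~\ref{rmk:Maslov}), which is why the isomorphism is stated $\Z/2$-graded; preserving the full $\Z$ grading would require $L$ to have Maslov number $0$. Establishing the flow-tree correspondence and the attendant sign bookkeeping is the delicate heart of the proof, whereas the reduction of self-Floer cohomology to $H^*(L)$ and the duality converting this to $H_{*+1}(L,\Lambda;\k)$ are comparatively formal.
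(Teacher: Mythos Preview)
Your proposal sketches a from-scratch analytic proof of the Seidel isomorphism via neck-stretching and SFT compactness. The paper does not do this: the proposition is stated as a citation to \cite[Proposition~3.4]{GR}, and the only argument the paper supplies is the short remark immediately following, which explains how the stated form follows from the Gao--Rutherford result. Specifically, \cite{GR} gives $H^*\Hom_+(\e_L,\e_L)\cong H^*(L;\k)$; the paper then invokes Sabloff duality to identify $H^*\Hom_+(\e_L,\e_L)$ with $\LCH_{1-*}^\e(\Lambda;\k)$ and Poincar\'e--Lefschetz duality to identify $H^*(L;\k)$ with $H_{2-*}(L,\Lambda;\k)$, which after reindexing is the displayed isomorphism.

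So your outline is not wrong as a strategy---it is essentially the shape of the argument in \cite{EkholmSFT} and related work that underlies the cited result---but it is a different route at a different level of abstraction. The paper treats the analytic package (compactness, transversality, coherent orientations, flow-tree correspondence) as a black box provided by \cite{GR,EkholmSFT,Karlsson,DR}, and its own contribution is only the two-line duality translation. Your second paragraph (``Equivalently, I would route the argument through Lagrangian Floer theory\dots'') is closest in spirit to what the paper does, since it also lands on a self-Floer/ordinary cohomology identification and then applies duality; but you are proposing to redo the Floer-to-Morse reduction rather than quote it. If you were writing this up, the appropriate move is to cite \cite{GR} for the $\Hom_+$ isomorphism and then give the Sabloff/Poincar\'e duality translation, rather than reproving the SFT degeneration.
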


\begin{remark}
Proposition~\ref{prop:seidel} is actually a special case of \cite[Proposition~3.4]{GR}. More precisely, that result states that if $\e_L$ is a $\k$-valued augmentation induced by a filling $L$, then
\[
H^*\Hom_+(\e_L,\e_L) \cong H^*(L;\k).
\]
See \cite{NRSSZ} for the definition of $\Hom_+$, but to deduce Proposition~\ref{prop:seidel}, it suffices to note that
$H^*\Hom_+(\e_L,\e_L) \cong LCH_{1-*}^\e(\Lambda;\k)$ by Sabloff duality (see \cite[\S 5]{NRSSZ}), while $H^*(L;\k) \cong H_{2-*}(L,\Lambda;\k)$ by Poincar\'e duality. Also note that the fact that the isomorphism is only $\Z/2$-graded in general is because the Maslov number of $L$ might not be $0$; cf.\ Remark~\ref{rmk:Maslov}.
\end{remark}

Now consider the augmentation $\e_n \co \A_{\Lambda_k} \to \Z$ for $n\neq 0,\pm 1$. If this were geometric, then its mod $p$ reduction $\e_{n;p} \co \A_{\Lambda_k} \to \Z/p$ would also be geometric for any prime $p$. But it follows from the calculation of $\LCH_*^{\e_n}(\Lambda_k)$ from Section~\ref{ssec:family} that if $p$ is a prime dividing $n$, then $\LCH_*^{\e_{n;p}}(\Lambda_k;\Z/p)$ has dimension $7$ as a vector space over $\Z/p$. On the other hand, by \cite{Chantraine-cobordism}, any filling $L$ of $\Lambda_k$ must satisfy $2g(L)-1 = \tb(\Lambda_k) = 1$, whence $L$ is a punctured torus and $H_*(L,\Lambda;\k)$ has total dimension $3$. This contradicts Proposition~\ref{prop:seidel}, and we conclude that $\e_n$ is not geometric.

To complete the proof of Proposition~\ref{prop:geometric}, we will show that the mod $2$ reduction $\e_{n;2} \co \A_{\Lambda_k} \to \Z/2$ is geometric when $n$ is odd. To do this, we construct an explicit filling $L$ of $\Lambda_k$ inducing the augmentation $\e_{n;2}$.

Via the resolution procedure, we can assume that the Lagrangian projection $\Pi_{xy}(\Lambda_k)$ is as shown on the left of Figure~\ref{fig:cobordism}. In the language of \cite{EHK}, our filling $L$ is decomposable and is constructed as follows. We concatenate two saddle cobordisms that replace the crossings $a_1$ and $a_3$ in $\Pi_{xy}(\Lambda_k)$ consecutively by their oriented resolution (in the standard knot theory sense). The result is the Legendrian knot $\Lambda'$ shown on the right of Figure~\ref{fig:cobordism}, and the saddle cobordisms yield a cobordism $L_1$ from $\Lambda'$ at the bottom to $\Lambda_k$ at the top. Now by inspection, $\Lambda'$ is a standard Legendrian unknot and thus has a filling $L_2$. The concatenation $L_1 \cup L_2$ is the desired filling $L$ of $\Lambda_k$.

\begin{figure}
\labellist
\small\hair 2pt
\pinlabel $a_1$ at 16 49
\pinlabel $a_3$ at 75 49
\endlabellist
\centering
\includegraphics[width=\textwidth]{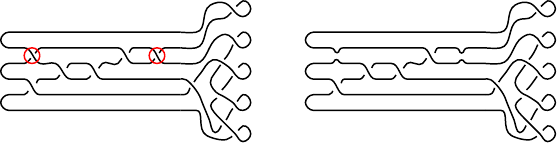}
\caption{\textbf{The cobordism $L_1$ built from two saddle moves.} Resolving the circled crossings on the left via two saddle moves produces the cobordism $L_1$ from a standard Legendrian unknot $\Lambda'$ (right) to $\Lambda_k$. The case $k=2$ is shown.
}
\label{fig:cobordism}
\end{figure}

In constructing the filling $L$, there is one important point to check: in order for the saddle cobordism $L_1$ to be realized as an exact Lagrangian, we need to verify that the Reeb chords $a_1$ and $a_3$ are contractible in the sense of \cite{EHK}. That is, we must find a Legendrian isotopy starting at $\Lambda_k$ such that the Lagrangian projections remain planar isotopic to the knot diagram $\Pi_{xy}(\Lambda_k)$ throughout the isotopy and such that the height of the crossings at $a_1$ and $a_3$ both approach $0$ at the end of the isotopy. To do this, modify $\Pi_{xy}(\Lambda_k)$ by a planar isotopy to obtain the diagram at the top of Figure~\ref{fig:resolution}. This diagram is the Lagrangian projection of a Legendrian knot whose front projection is given by the bottom diagram in Figure~\ref{fig:resolution}: recall that we translate from $xz$ to $xy$ projection by setting $y=dz/dx$. In the front projection, we have not drawn the entire front, but rather just the key portions drawn in color. The remainder of the front, corresponding to the black portion of the Lagrangian projection, is completed as in the resolution procedure from \cite[Proof of Proposition 2.2]{CLI}. That is, we stretch the black portion of the Lagrangian projection of $\Lambda_k$ rightwards in such a way that it coincides with the resolution of the corresponding portion of the front projection (which in turn is given by line segments and small exceptional segments, as in that proof).
It is now apparent that the front can be perturbed by translating the red portion in the negative $z$ direction, in such a way that the two segments labeled $a_1$ and $a_3$ both shrink to a point. This does not change any portion of the Lagrangian projection (up to planar isotopy) apart from $a_1$ and $a_3$ and verifies that $a_1$ and $a_3$ are indeed contractible.

\begin{figure}
\labellist
\small\hair 2pt
\pinlabel $x$ at 26 125
\pinlabel $y$ at 2 150
\pinlabel $x$ at 25 3
\pinlabel $z$ at 2 27
\pinlabel $a_1$ at 43 155
\pinlabel $a_2$ at 144 163
\pinlabel $a_3$ at 220 163
\pinlabel $a_4$ at 55 147
\pinlabel $a_5$ at 66 155
\pinlabel $a_6$ at 122 154
\pinlabel $a_1$ at 43 35
\pinlabel $a_2$ at 144 75
\pinlabel $a_3$ at 219 97
\pinlabel $a_4$ at 54 14
\pinlabel $a_5$ at 66 14
\pinlabel $a_6$ at 122 41
\endlabellist
\centering
\includegraphics[width=\textwidth]{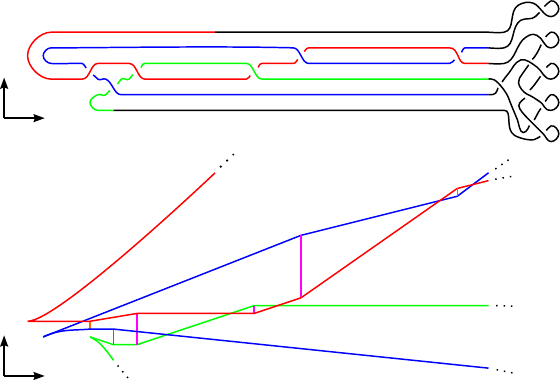}
\caption{The Reeb chords $a_1$ and $a_3$ in the Lagrangian projection of $\Lambda_k$ are simultaneously contractible. Top, the Lagrangian projection for $\Lambda_k$; bottom, a corresponding front diagram for $\Lambda_k$. The omitted portions of the front follow the standard resolution procedure.
}
\label{fig:resolution}
\end{figure}

Now let $\e \co \A_{\Lambda_k} \to \Z/2$ be the augmentation to $\Z/2$ induced by the filling $L$. Since the cobordism $L_1$ from $\Lambda'$ to $\Lambda_k$ consists of a concatenation of two saddle cobordisms at contractible Reeb chords, there is a combinatorial formula for the resulting DGA map $\Phi_{L_1} \co (\A_{\Lambda_k},\d) \to (\A_{\Lambda'},\d)$ given by \cite[Proposition~6.18]{EHK}. For our purposes, it suffices to note that $\Phi_{L_1}(a_1)=\Phi_{L_1}(a_3)=1$ since $a_1$ and $a_3$ are the Reeb chords being removed. Since $\e$ is the composition of $\Phi_{L_1}$ with a map $\A_{\Lambda'}\to\Z/2$ corresponding to the filling $L_2$, we conclude that $\e(a_1)=\e(a_3)=1$.

The conditions that $\e(a_1)=\e(a_3)=1$ uniquely determine the augmentation $\e$: from the computation in Section~\ref{sssec:Lambdak-linhom}, any augmentation $\e$ of $\Lambda_k$ satisfies $\e(a_1)+\e(a_3)+\e(a_1)\e(a_2)\e(a_3)=1$, whence $\e(a_2)=1$ in our case, and furthermore $\e$ is uniquely determined by $\e(a_i)$ for $i=1,2,3$. But $\e_{n;2}$ also satisfies $\e_{n;2}(a_1)=\e_{n;2}(a_3)=1$ when $n$ is odd. Hence, $\e=\e_{n;2}$ and thus $\e_{n;2}$ is geometric. This completes the proof of Proposition~\ref{prop:geometric}.

\begin{remark}
A similar but slightly more complicated calculation shows that for the knot $\Lambda_0$ from Section~\ref{ssec:821}, the augmentation $\e_n$ is geometric over $\Z/2$ but not over $\Z$ when $n$ is odd and $n\neq \pm 1$. 
\end{remark}

\subsection{Further comments}
We conclude with some observations about when torsion appears for knots in the Legendrian knot atlas \cite{atlas}.

The knots $m(8_{21})$ and $m(9_{45})B$ are $\Lambda_0$ and $\Lambda_1$ discussed above. (Here we write $m(9_{45})B$ for the second Legendrian representative of $m(9_{45})$ listed in the atlas, and similarly for other knots below.) Direct computation shows that three other knots have torsion: $m(9_{45})A$, $11^n_{95}$, and $11^n_{118}$ (note that $n$ here denotes ``non-alternating''). Like the examples discussed above, for any $m\in\Z$, one can find an augmentation $\epsilon_m$ for each of these knots so that the Legendrian contact homology has a $\Z/m$ summand.

There are also many knots which can be shown not to have torsion:

\begin{definition}
A Legendrian knot is \textit{positive} if it has rotation number $0$ and is Legendrian isotopic to a knot for which all the Reeb chords have nonnegative grading.
\end{definition}

\begin{proposition}
Let $\Lambda$ be a positive Legendrian knot with Thurston--Bennequin number $\tb(\Lambda)$. Then the linearized contact homology for any $\Z$-valued augmentation $\e$ of $\Lambda$ is given by:
\[
\LCH_*^{\e}(\Lambda) \cong \begin{cases} \Z & *=1 \\ \Z^{\tb(\Lambda)+1} & *=0 \\ 0 & \text{otherwise.} \end{cases}
\]
In particular, no linearized contact homology for a positive Legendrian knot can contain torsion.
\end{proposition}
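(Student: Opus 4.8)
The plan is to pin down $\LCH_*^\e(\Lambda;\k)$ for \emph{every} field $\k$ first, and then bootstrap to $\Z$ via the universal coefficient theorem, using that no torsion can hide once all the field dimensions are known and agree. The structural input is positivity: after a Legendrian isotopy we may assume every Reeb chord $a_i$ has $|a_i|\geq 0$, so the free module $V$ underlying the linearized complex is concentrated in nonnegative degrees. Since $\d^\e$ has degree $-1$, this at once gives $\LCH_i^\e(\Lambda;R)=0$ for all $i<0$ over any coefficient ring $R$.

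Next I would compute the Poincar\'e polynomial over an arbitrary field $\k$. Because $V_i=0$ for $i<0$ we have $\LCH_{-i}^\e(\Lambda;\k)=0$ for $i>0$, and feeding this into Sabloff duality (available over any field by Section~\ref{ssec:background}, including characteristic $2$) forces $\LCH_i^\e(\Lambda;\k)=0$ for all $i\geq 2$ by~\eqref{eq:Sabloff-generic}, while $\LCH_1^\e(\Lambda;\k)\cong\LCH_{-1}^\e(\Lambda;\k)\oplus\k\cong\k$ by~\eqref{eq:Sabloff-1}. For the degree-$0$ term I would use the Euler characteristic of the finite complex $(V,\d^\e)$, which is computed from the generators alone: $\chi=\sum_i(-1)^i\dim_\k V_i=\sum_a(-1)^{|a_i|}$, manifestly independent of $\k$ and of $\e$. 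I would then identify $\sum_a(-1)^{|a_i|}=\tb(\Lambda)$ via the standard front formula $\tb=w-R$ (writhe minus number of right cusps): right cusps have grading $1$ and contribute $-R$, while each crossing $a$ satisfies $(-1)^{|a|}=\operatorname{sgn}(a)$ and so the crossings contribute $w$. Equating $\chi$ with the Euler characteristic of the homology gives $\dim_\k\LCH_0^\e(\Lambda;\k)-1=\tb(\Lambda)$, i.e.\ $\dim_\k\LCH_0^\e(\Lambda;\k)=\tb(\Lambda)+1$. In particular the dimension of $\LCH_i^\e(\Lambda;\k)$ in each degree is the same for every field $\k$.

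Finally I would promote this to $\Z$. The complex computing $\LCH_*^\e(\Lambda;\k)$ is $(V,\d^\e)\otimes\k$ (the formula for $\d^\e$ is natural under reduction of coefficients, exactly as already used in Section~\ref{sssec:821geom}), so the universal coefficient theorem relates the groups $H_i:=\LCH_i^\e(\Lambda)$ to the field computations. Taking $\k=\Q$ shows $\operatorname{rank}H_i$ equals the field dimension found above, while for $\k=\Z/p$ the short exact sequence gives $\dim_{\Z/p}\LCH_i^\e(\Lambda;\Z/p)=\dim_{\Z/p}(H_i\otimes\Z/p)+\dim_{\Z/p}\operatorname{Tor}(H_{i-1},\Z/p)$. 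Since the left-hand side equals $\operatorname{rank}H_i$ by field-independence, while the right-hand side is $\operatorname{rank}H_i$ plus the nonnegative mod-$p$ torsion contributions of $H_i$ and $H_{i-1}$, those contributions all vanish; as this holds for every prime $p$, no $H_i$ has torsion. Combined with the ranks already computed, this yields exactly $H_1\cong\Z$, $H_0\cong\Z^{\tb(\Lambda)+1}$, and $H_i=0$ otherwise.

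The main obstacle is getting the Euler characteristic identity $\sum_a(-1)^{|a|}=\tb(\Lambda)$ exactly right, and in particular the sign relation $(-1)^{|a|}=\operatorname{sgn}(a)$ between grading parity and writhe sign at each crossing, which relies on the rotation-number-$0$ hypothesis built into positivity. The only other point needing care is that Sabloff duality be invoked over all fields, including $\Z/2$; this is precisely the form recorded in Section~\ref{ssec:background}, so it is available, and the rest of the argument is then routine.
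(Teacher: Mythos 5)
Your proposal is correct and takes essentially the same route as the paper's proof: reduce to field coefficients via the universal coefficient theorem, get vanishing in negative degrees from positivity, apply Sabloff duality (Formulas~\eqref{eq:Sabloff-generic} and~\eqref{eq:Sabloff-1}) to handle degrees $\geq 1$, and fix degree $0$ by the Euler characteristic identity $\chi = \tb(\Lambda)$. You merely make explicit two points the paper leaves implicit---the per-crossing parity identity $(-1)^{|a|}=\operatorname{sgn}(a)$ underlying $\chi=w-R=\tb$, and the $\operatorname{Tor}$ bookkeeping in the universal coefficient step---both of which are handled correctly.
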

\begin{proof}
  This follows from Sabloff duality. Specifically, by the universal
  coefficient theorem, it suffices to prove that for any field $\k$,
  \[
    \LCH_*^{\e}(\Lambda;\k) \cong \begin{cases} \k & *=1 \\ \k^{\tb(\Lambda)+1} & *=0 \\ 0 & \text{otherwise.} \end{cases}
  \]
  By hypothesis, for $i<0$, $\LCH_i^\e(\Lambda;\k)=0$. So,
  Formula~\eqref{eq:Sabloff-generic} implies that
  $\LCH_i^\e(\Lambda;\k)=0$ for $i>1$ and
  Formula~\eqref{eq:Sabloff-1} implies that
  $\LCH_1^\e(\Lambda;\k)=\k$. Finally, the Euler characteristic of
  $\LCH_*^\e(\Lambda)$ is equal to the writhe minus the number of
  right cusps which, in turn, is $\tb(\Lambda)$, so
  $\LCH_0^\e(\Lambda;\k)$ must have dimension $\tb(\Lambda)+1$.
\end{proof}

Positivity of $\Lambda$ can sometimes be deduced from a presentation of $\Lambda$ as a braid closure.
For instance, it is immediate from the formula for gradings of Reeb chords (see Section~\ref{ssec:background}) that rainbow closures of positive braids (see~\cite{STZ}) are positive; in the atlas, this means $m(3_1),\ m(5_1),\ m(7_1),$ $8_{19},\ 10_{124},\ 15^n_{41185}$ (all torus knots), $10_{139}$, and $12^n_{242}$ are positive. More generally, given an \textit{admissible} positive braid in the sense of~\cite[Definition 2.5]{CN}, the result of multiplying by $\Delta^{-2}$ and taking the closure gives a positive Legendrian; see~\cite[Section 5.1]{CN}. (By~\cite[Proposition 2.7]{CN}, any positive braid containing a half-twist is admissible.) If the topological knot type has a unique representative with maximal Thurston--Bennequin number, and appears in the atlas, then the representative in the atlas must be positive. Alternatively, it is sometimes possible to check directly that a Legendrian knot is positive. Via such considerations, the following knots are also positive:
\begin{center}
  \begin{tabular}{c}
  $m(5_2)B,\ m(7_2)B,\ m(7_2)D,\ 7_3B,\ 7_4, \ m(7_5)B,\ 9_{49}, \ 10_{128}B,$\\
  $m(10_{142})B,\ m(10_{145}),\ m(10_{161}),\ 12^n_{591}$.
  \end{tabular}
\end{center}

This leaves $35$ Legendrian knots in the atlas which admit (graded) augmentations but may or may not have torsion. We gathered some computational evidence that none of these knots admits augmentations with torsion:
\begin{itemize}
\item The dimension of $\LCH^*_\epsilon(\Lambda;\k)$ is the same for all $\ZZ/2$-valued and $\ZZ/3$-valued augmentations $\epsilon$ (where $\k=\ZZ/2$ or $\ZZ/3$). It follows from the universal coefficient theorem that if $\epsilon$ is a $\ZZ$-valued augmentation for which $\LCH^*_\epsilon(\Lambda)$ has 2-torsion or 3-torsion, then in fact $\LCH^*_\epsilon(\Lambda)$ has a copy of $\ZZ/6$.
\item The Bockstein map induced by the short exact sequence $0\to\ZZ/2\to\ZZ/4\to\ZZ/2\to 0$ vanishes for all augmentations for these knots. So, any $2^n$-torsion would have to arise as copies of $\ZZ/4$ rather than $\ZZ/2$ (and so the $6$ in the previous point is, in fact, $12$).
\end{itemize}
(For the $24$ of these knots with crossing number $\leq 9$, the dimension of $\LCH^*_\epsilon(\Lambda;\ZZ/5)$ was also constant, agreeing with the dimension over $\ZZ/2$ and $\ZZ/3$, for all $\ZZ/5$-valued augmentations $\epsilon$, so any 2-, 3-, or 5-torsion would have to arise as $\ZZ/60$-torsion. For more complicated knots, the number of $\ZZ/5$-valued augmentations became too large for our na\"ive programs to check.)

Our limited computations support positive answers to the following
questions:
\begin{question}
  Suppose a Legendrian knot $\Lambda$ has augmentations $\e_1$ and
  $\e_2$ to some field $\k$ so that
  $\LCH_*^{\e_1}(\Lambda;\k)\not\cong\LCH_*^{\e_2}(\Lambda;\k)$. Does it
  follow that $\Lambda$ admits a $\ZZ$-valued augmentation $\e$ so
  that $\LCH_*^{\e}(\Lambda)$ has torsion?
\end{question}

\begin{question}
  Suppose a Legendrian knot $\Lambda$ has a $\ZZ$-valued augmentation
  $\e$ so that $\LCH_*^{\e}(\Lambda)$ has torsion. Does it follow that
  for every prime $p$ there is a $\ZZ$-valued augmentation $\e_p$ so
  that $\LCH_*^{\e_p}(\Lambda)$ contains $p$-torsion? That for every
  integer $n$ there is a $\ZZ$-valued augmentation $\e_n$ so that
  $\LCH_*^{\e_n}(\Lambda)$ has a $\ZZ/n$-summand?
\end{question}

\bibliographystyle{plain}
\bibliography{torsion-biblio}

\begin{thebibliography}{10}

\bibitem{BC:aug-cat}
Fr\'{e}d\'{e}ric Bourgeois and Baptiste Chantraine.
\newblock Bilinearized {L}egendrian contact homology and the augmentation category.
\newblock {\em J. Symplectic Geom.}, 12(3):553--583, 2014.

\bibitem{CN}
Roger Casals and Lenhard Ng.
\newblock Braid loops with infinite monodromy on the {L}egendrian contact {DGA}.
\newblock {\em J. Topol.}, 15(4):1927--2016, 2022.

\bibitem{Chantraine-cobordism}
Baptiste Chantraine.
\newblock Lagrangian concordance of {L}egendrian knots.
\newblock {\em Algebr. Geom. Topol.}, 10(1):63--85, 2010.

\bibitem{CDGG:aug-cat}
Baptiste Chantraine, Georgios Dimitroglou~Rizell, Paolo Ghiggini, and Roman Golovko.
\newblock Noncommutative augmentation categories.
\newblock In {\em Proceedings of the {G}\"{o}kova {G}eometry-{T}opology {C}onference 2015}, pages 116--150. G\"{o}kova Geometry/Topology Conference (GGT), G\"{o}kova, 2016.

\bibitem{Che}
Yuri Chekanov.
\newblock Differential algebra of {L}egendrian links.
\newblock {\em Invent. Math.}, 150(3):441--483, 2002.

\bibitem{atlas}
Wutichai Chongchitmate and Lenhard Ng.
\newblock An atlas of {L}egendrian knots.
\newblock {\em Exp. Math.}, 22(1):26--37, 2013.

\bibitem{DR}
Georgios Dimitroglou~Rizell.
\newblock Lifting pseudo-holomorphic polygons to the symplectisation of {$P\times\Bbb{R}$} and applications.
\newblock {\em Quantum Topol.}, 7(1):29--105, 2016.

\bibitem{EkholmSFT}
Tobias Ekholm.
\newblock Rational symplectic field theory over {$\Bbb Z_2$} for exact {L}agrangian cobordisms.
\newblock {\em J. Eur. Math. Soc. (JEMS)}, 10(3):641--704, 2008.

\bibitem{EES-ori}
Tobias Ekholm, John Etnyre, and Michael Sullivan.
\newblock Orientations in {L}egendrian contact homology and exact {L}agrangian immersions.
\newblock {\em Internat. J. Math.}, 16(5):453--532, 2005.

\bibitem{EES:duality}
Tobias Ekholm, John~B. Etnyre, and Joshua~M. Sabloff.
\newblock A duality exact sequence for {L}egendrian contact homology.
\newblock {\em Duke Math. J.}, 150(1):1--75, 2009.

\bibitem{EHK}
Tobias Ekholm, Ko~Honda, and Tam\'{a}s K\'{a}lm\'{a}n.
\newblock Legendrian knots and exact {L}agrangian cobordisms.
\newblock {\em J. Eur. Math. Soc. (JEMS)}, 18(11):2627--2689, 2016.

\bibitem{Etgu}
Tolga Etg\"{u}.
\newblock Nonfillable {L}egendrian knots in the 3-sphere.
\newblock {\em Algebr. Geom. Topol.}, 18(2):1077--1088, 2018.

\bibitem{EHsum}
John~B. Etnyre and Ko~Honda.
\newblock On connected sums and {L}egendrian knots.
\newblock {\em Adv. Math.}, 179(1):59--74, 2003.

\bibitem{ENsurvey}
John~B. Etnyre and Lenhard~L. Ng.
\newblock Legendrian contact homology in {$\mathbb{R}^3$}.
\newblock In {\em Surveys in differential geometry 2020. {S}urveys in 3-manifold topology and geometry}, volume~25 of {\em Surv. Differ. Geom.}, pages 103--161. Int. Press, Boston, MA, 2022.

\bibitem{ENS}
John~B. Etnyre, Lenhard~L. Ng, and Joshua~M. Sabloff.
\newblock Invariants of {L}egendrian knots and coherent orientations.
\newblock {\em J. Symplectic Geom.}, 1(2):321--367, 2002.

\bibitem{GR}
Honghao Gao and Dan Rutherford.
\newblock Non-fillable augmentations of twist knots.
\newblock {\em Int. Math. Res. Not. IMRN}, (2):1255--1291, 2023.

\bibitem{Golovko}
Roman Golovko.
\newblock On torsion in linearized {L}egendrian contact homology.
\newblock {\em J. Knot Theory Ramifications}, 32(7):Paper No. 2350056, 6, 2023.

\bibitem{Karlsson}
Cecilia Karlsson.
\newblock A note on coherent orientations for exact {L}agrangian cobordisms.
\newblock {\em Quantum Topol.}, 11(1):1--54, 2020.

\bibitem{Leverson}
C.~Leverson.
\newblock Augmentations and rulings of {L}egendrian knots.
\newblock {\em J. Symplectic Geom.}, 14(4):1089--1143, 2016.

\bibitem{MS}
Paul Melvin and Sumana Shrestha.
\newblock The nonuniqueness of {C}hekanov polynomials of {L}egendrian knots.
\newblock {\em Geom. Topol.}, 9:1221--1252, 2005.

\bibitem{NgKCH2}
Lenhard Ng.
\newblock Knot and braid invariants from contact homology. {II}.
\newblock {\em Geom. Topol.}, 9:1603--1637, 2005.
\newblock With an appendix by the author and Siddhartha Gadgil.

\bibitem{NRSSZ}
Lenhard Ng, Dan Rutherford, Vivek Shende, Steven Sivek, and Eric Zaslow.
\newblock Augmentations are sheaves.
\newblock {\em Geom. Topol.}, 24(5):2149--2286, 2020.

\bibitem{CLI}
Lenhard~L. Ng.
\newblock Computable {L}egendrian invariants.
\newblock {\em Topology}, 42(1):55--82, 2003.

\bibitem{Sabloff:duality}
Joshua~M. Sabloff.
\newblock Duality for {L}egendrian contact homology.
\newblock {\em Geom. Topol.}, 10:2351--2381, 2006.

\bibitem{STZ}
Vivek Shende, David Treumann, and Eric Zaslow.
\newblock Legendrian knots and constructible sheaves.
\newblock {\em Invent. Math.}, 207(3):1031--1133, 2017.

\end{thebibliography}

\vspace{11pt}

\end{document}